\documentclass{IEEEtran}

\scrollmode

\usepackage{amsmath}
\usepackage{amsthm}
\usepackage{amssymb}
\usepackage{srcltx}
\usepackage{hyperref}
\usepackage{tikz}

\newtheorem{lemma}{Lemma}[section]
\newtheorem{theorem}[lemma]{Theorem}
\newtheorem{corollary}[lemma]{Corollary}
\newtheorem{proposition}[lemma]{Proposition}

\theoremstyle{definition}
\newtheorem{example}[lemma]{Example}
\newtheorem{definition}[lemma]{Definition}
\newtheorem{remark}[lemma]{Remark}

\numberwithin{equation}{section}

\newcommand{\leaveout}[1]{}

\makeatletter
\renewcommand{\p@enumii}{}
\makeatother

\newcommand{\DD}[1]{\mathbin{\frac{\rm d }{{\rm d }#1}}}
\newcommand{\ddt}{\DD t}

\newcommand{\ud}{\,{\mathrm d}}

\newcommand\R{{\mathbb R}}
\newcommand\C{{\mathbb C}}

\newcommand\K{{\mathbb K}}
\newcommand\F{{\mathbb F}}

\newcommand\rplus{{\R_{+}}}
\newcommand\cplus{{\C_{+}}}

\newcommand\T{{\mathbb T}}

\newcommand{\Escr}{\mathcal E}
\newcommand{\Fscr}{\mathcal F}

\newcommand{\Lscr}{\mathcal L}

\newcommand{\Uscr}{\mathcal U}
\newcommand{\Xscr}{\mathcal X}
\newcommand{\Yscr}{\mathcal Y}
\newcommand{\Zscr}{\mathcal Z}
\newcommand{\Vscr}{\mathcal V}

\newcommand{\proj}{\mathbf P}

\newcommand{\AB}{{A\& B}}
\newcommand{\CD}{{C\& D}}

\newcommand{\SysNode}{\bbm{\AB \cr \CD}}
\newcommand{\SmallSysNode}{\sbm{\AB \cr \CD}}


\newcommand{\dom}[1]{\mathrm{dom}\left(#1\right)}

\newcommand{\re}[0]{\mathrm{Re}\,}

\newcommand{\Ipdp}[2]{\left\langle #1 , #2 \right\rangle}

\newcommand{\set}[1]{\left\lbrace #1 \right\rbrace}

\newcommand{\biggmid}{\biggm\vert}

\newcommand{\bi}{\begin{itemize}}
\newcommand{\ei}{\end{itemize}}
\newcommand{\be}{\begin{enumerate}}
\newcommand{\ee}{\end{enumerate}}

\newcommand{\Afrak}{\mathfrak A}
\newcommand{\Bfrak}{\mathfrak B}
\newcommand{\Cfrak}{\mathfrak C}

\newcommand{\sbm}[1]{\left[\begin{smallmatrix}#1\end{smallmatrix}\right]}

\newcommand{\bbm}[1]{\begin{bmatrix}#1\end{bmatrix}}

\makeatletter

\def\etv{& \hskip-.3em\vrule\hskip-.3em &} 
\def\smalletv{&\vrule&} 
\def\smallcrh{\vrule height0pt depth2\ex@ width0pt
\cr\noalign{\hrule}
\vrule height6.5\ex@ depth0pt width0pt}
\newbox\smallstrutbox
\setbox\smallstrutbox=\hbox{\vrule height6pt depth1.5pt width\z@}
\def\smallstrut{\relax\ifmmode\copy\smallstrutbox\else\unhcopy\smallstrutbox\fi}

\newenvironment{sysmatrix}{
\let\|=\etv
\hskip \arraycolsep
\begin{matrix}}
{\end{matrix}
\hskip \arraycolsep
}       

\newenvironment{smallsysmatrix}{\null\,\vcenter\bgroup
\let\|=\smalletv

\def\\{\smallstrut\math@cr}
\restore@math@cr\default@tag
\baselineskip\z@skip \lineskip\z@skip \lineskiplimit\lineskip
\ialign\bgroup\hfil$\m@th\scriptstyle##$\hfil&&\thickspace\hfil
$\m@th\scriptstyle##$\hfil\crcr
\crcr\noalign{\vskip -.3\ex@}%
}{\crcr\noalign{\vskip -.2\ex@}%
\crcr\egroup\egroup\,%
}

\makeatother


\begin{document}

\title{Solvability of time-varying infinite-dimensional linear port-Hamiltonian systems}

\author{Mikael Kurula

\thanks{M. Kurula is with \AA bo Akademi University Mathematics, Henriksgatan 2, 20500 \AA bo, Finland (e-mail: mkurula@abo.fi).}}

\thispagestyle{empty}

\maketitle

\begin{abstract}
Thirty years after the introduction of port-Hamiltonian systems, interest in this system class still remains high among systems and control researchers. Very recently, Jacob and Laasri obtained strong results on the solvability and well-posedness of time-varying linear port-Hamil\-to\-nian systems with boundary control and boundary observation. In this paper, we complement their results by discussing the solvability of linear, infinite-dimensional time-varying port-Hamiltonian systems not necessarily of boundary control type. The theory is illustrated on a system with a delay component in the state dynamics.
\end{abstract}

\section{Introduction}

Ever since they were introduced by Maschke and van der Schaft in \cite{MaSch92}, the study of port-Hamiltonian systems have inspired intensive research. For a nice overview of the fundaments and early history of port-Hamiltonian systems, see the introduction to \cite{vdSJel14}. In his PhD thesis \cite{GoloPhD}, Golo started the study of \emph{infinite-dimensional} Dirac structures, which describe the structure of a given port-Hamiltonian system. The methods of semigroup theory were first put to use in the study of port-Hamiltonian systems by Le Gorrec, Zwart and Maschke in \cite{GZM05}. This direction was further developed in \cite{KZSB10} and \cite{KuZw15}, the latter being the first treatment of a port-Hamiltonian system with a spatial dimension larger than one. In \cite{Skr21}, Skrepek studied boundary controlled port-Hamiltonian systems on a high-dimensional spatial domain. Also very recently, in \cite{AuLa20,JaLa21}, Augner, Jacob and Laasri  studied solvability, well-posedness and exponential stability for time-varying \emph{boundary controlled} port-Hamiltonian systems, hence starting the study of \emph{time-varying} infinite-dimensional port-Hamiltonian systems. In the aforementioned references, one can find applications to the string and wave equation, beam and plate equations, and Maxwell's equations.

In this short note, we will study the solvability of time-varying, linear but in general infinite-dimensional, port-Hamiltonian systems, which are \emph{not necessarily} of boundary control type. To the author's knowledge, this has not yet been researched. 

In \S\ref{sec:Ex}, we will illustrate the theory using a time-varying finite-dimensional system with a delay $\tau>0$ in the state, 
\begin{equation}\label{eq:delayprel}
	\left\{\begin{aligned} \dot z(t)&=(J-R)H(t)z(t)-A_1H(t-\tau)z(t-\tau)+Ee(t),\\
	 f(t)&=E^*H(t)z(t),\quad 
	 t\geq0,\quad (Hx)|_{[-\tau,0]}=w_0~\text{given}, \end{aligned}\right.
\end{equation}
where, for simplicity, $J=-J^*,R\geq0,A_1,E, H(t)>0$ are all bounded operators, with some additional properties which will be described later; $H>0$ means that $H$ is self-adjoint and coercive: for some $\gamma>0$, all $z\in\dom H$, $\Ipdp{Hz}{z}\geq \gamma\|z\|^2$.

In \S\ref{sec:Dirac}, we introduce the \emph{Dirac node}, an infinite-dimensional analogue of the connecting matrix $\sbm{A&B\\C&D}$ of a linear time-invariant system and an operator version of the so-called Dirac structure \cite{KZSB10}, so that \eqref{eq:delayprel} can be written in a form resembling
\begin{equation}\label{eq:pHimpIntro}
	\bbm{\dot x(t)\\f(t)}=\bbm{A+G(t)P(t)^{-1}&B\\B^*&0}\bbm{P(t)x(t)\\e(t)},
	\quad t\geq0.
\end{equation}
We also touch on the solvability properties of a Dirac node.

In \S\ref{sec:TVpH}, we make a precise definition of a time-varying port-Hamiltonian system, and define its \emph{external scattering representation}, giving some of its properties. In \S\ref{sec:Scatt}, we apply the theory of time-varying well-posed systems to the scattering representation, and \S\ref{sec:pHtraj} ends the theoretical part with discussion on the solvability of the \emph{impedance representation} \eqref{eq:pHimpIntro}. Our main contributions are Sections II, V and VI.

\section{Dirac nodes and system nodes}\label{sec:Dirac}

Let the \emph{state space} $\Xscr$, the \emph{space of external efforts} $\Escr$ and the \emph{space of external flows} $\Fscr$ all be (complex or real) Hilbert spaces. Assume that $\Escr$ and $\Fscr$ are dual, and let $\psi:\Fscr\to\Escr$ be the associated unitary duality map, which satisfies
$$
	\Ipdp{\psi f}{e}_\Escr=\Ipdp{f}{e}_{\Fscr,\Escr}, \quad
	f\in\Fscr,~e\in\Escr.
$$

\begin{definition}\label{def:DiracNode}
By a \emph{Dirac node} on $(\Escr,\Xscr,\Fscr)$, we mean an in general unbounded linear operator
$$
	\SysNode:\bbm{\Xscr\\\Escr}\supset
		\dom\SmallSysNode\to \bbm{\Xscr\\\Fscr},
$$
such that $\sbm{\AB\\-\psi\CD}$ is maximal dissipative on $\sbm{\Xscr\\\Escr}$ and 
\begin{equation}\label{eq:InjBeta}
	\bbm{I_\Xscr&0\\0&\beta I_\Escr}+\bbm{0\\\psi\CD} 
\end{equation}
is injective for some $\beta\in\cplus:=\set{z\in\C\mid \re z>0}$.
\end{definition}

We next give two examples which show that most of the currently known linear port-Hamiltonian systems are governed by Dirac nodes. First consider the finite-dimensional system 
$$
	\dot z(t)=Az(t)+Be(t),\quad f(t)=Cz(t)+De(t).
$$

\begin{example}
Let $\Xscr=\K^n$ and $\Escr=\Fscr=\K^m$, where $\K=\R$ or $\K=\C$; then $\psi:\Escr\to\Escr$ is the identity matrix. Moreover, $\sbm{\AB\\-\psi\CD}=\sbm{A&B\\-C&-D}$ is maximal dissipative if and only if
$$
	A+A^*\leq0,\quad B=C^*,\quad D+D^*\geq0.
$$ 
Then $I+D$ is invertible, and
$$
 \left(\bbm{I_\Xscr&0\\0&I_\Escr}+\bbm{0\\\psi\CD}\right)
 \bbm{x\\e}=\bbm{I&0\\C&I+D}\bbm{x\\e}=0
$$
implies $x=0$, so that, moreover, $(I+D)e=0\implies e=0$.
\end{example}

The next example is an abstract class of boundary control systems, which contains most of the infinite-dimensional port-Hamiltonian systems which have been studied so far. In particular, it contains port-Hamiltonian systems with a one-dimensional spatial domain \cite{GZM05,JaZwBook}, the wave equation \cite{KuZw15}, as well as Maxwell's equations and the Mindlin plate \cite{Skr21}. For more background on the example, see \cite[\S4]{KZSB10} and \cite[\S5]{MaSt07}.

\begin{example}
Let $A_0$ be a closed, densely defined and symmetric operator on $\Xscr$, and let $(\Escr,\Gamma_1,\Gamma_2)$ be a boundary triplet \cite{GoGoBook} for $A_0^*$. Letting
$$
	\Afrak:=iA_0^*,\quad \Bfrak:=\Gamma_1,\quad
	\Cfrak:=-i\Gamma_2,
$$
we get that the operator defined by
$$
\begin{aligned}
	\SysNode&:=\bbm{\Afrak\\\psi^*\Cfrak}
	\bbm{I_\Xscr\\\Bfrak}^{-1},\\
	\dom{\SmallSysNode}&:=\bbm{I_\Xscr\\\Bfrak}\dom \Afrak,
\end{aligned}
$$
is a Dirac node: $\sbm{\AB\\-\psi\CD}^*=-\sbm{\AB\\-\psi\CD}$ and it is easy to see that \eqref{eq:InjBeta} with $\beta=1$ is injective.
\end{example}

A Dirac node describes the relationship between internal dynamics and external efforts and flows via
\begin{equation}\label{eq:LTI}
	\bbm{\dot x(t)\\f(t)}=\SysNode\bbm{x(t)\\e(t)},\qquad t\geq0.
\end{equation}
We will now see that it does in general not induce a ``system'' in a very strong sense. In order for \eqref{eq:LTI} to be solvable, the connecting operator $\SmallSysNode$ should have the structure of a \emph{system node} \cite[Lemma 4.7.7]{StafBook}:

\begin{definition}
By a \emph{system node} on $(\Escr,\Xscr,\Fscr)$, we mean a \emph{closed} linear operator 
$$
	\SysNode:\bbm{\Xscr\\\Escr}\supset
		\dom\SmallSysNode\to\bbm{\Xscr\\\Fscr}
$$
with the following properties:
\begin{enumerate}
\item the operator $\AB:\sbm{\Xscr\\\Escr}\supset\dom\AB\to\Xscr$ is closed, where $\dom\AB=\dom\SmallSysNode$,
\item the \emph{main operator} $A:\Xscr\supset\dom A\to\Xscr$ defined by
$$
	Ax:=\AB\bbm{x\\0},\quad \bbm{x\\0}\in\dom\SmallSysNode,
$$
generates a $C_0$-semigroup $\T$ on $\Xscr$, and
\item for every $e\in\Escr$, there exists an $x\in\Xscr$, such that $\sbm{x\\e}\in\dom\SmallSysNode$.
\end{enumerate}
\end{definition}

These three conditions essentially boil down to ``$A$ generating a solution, while at the same time being the most unbounded part of the node''. There is unfortunately no reason to believe that this should be the case for a general Dirac node. We have the following solvability result \cite[Thm 4.6.11]{StafBook}:

\begin{theorem}\label{thm:solution}
Let $\SmallSysNode$ be a system node, let $e\in H^1_{loc}(\rplus;\Escr)$ and $\sbm{x_0\\e(0)}\in\dom\SmallSysNode$. Then there exist $x\in C^1(\rplus;\Xscr)$ and $f\in H^1_{loc}(\rplus;\Fscr)$, such that \eqref{eq:LTI} holds.
\end{theorem}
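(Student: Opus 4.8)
The plan is to reduce the statement to the standard solvability theory for system nodes, which is how this theorem is stated in \cite[Thm 4.6.11]{StafBook}; the work here is really just to recall the construction and check that the stated regularity follows from the hypotheses. First I would observe that, since $\SmallSysNode$ is a system node, its main operator $A$ generates a $C_0$-semigroup $\T$ on $\Xscr$, and one has the associated \emph{extrapolation space} $\Xscr_{-1}$ (the completion of $\Xscr$ in the norm $\norm{(\beta-A)^{-1}\cdot}$ for some $\beta\in\res{A}$) on which $\T$ extends to a $C_0$-semigroup with generator $A_{-1}\in\BLO(\Xscr;\Xscr_{-1})$. The key structural fact I would invoke is that $\AB$ extends continuously to an operator $\bbm{A_{-1}&B}\in\BLO\bigl(\sbm{\Xscr\\\Escr};\Xscr_{-1}\bigr)$, so that $\dom\SmallSysNode=\set{\sbm{x\\e}\in\sbm{\Xscr\\\Escr}\mid A_{-1}x+Be\in\Xscr}$ and the first row of \eqref{eq:LTI} reads $\dot x(t)=A_{-1}x(t)+Be(t)$ in $\Xscr_{-1}$.

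The main step is then to define the state trajectory by the variation-of-constants formula
\begin{equation*}
	x(t)=\T(t)x_0+\int_0^t\T_{-1}(t-s)Be(s)\ud s,\qquad t\ge0,
\end{equation*}
the integral being taken in $\Xscr_{-1}$. Using $e\in H^1_\loc(\rplus;\Escr)$, one integrates by parts in the convolution term to rewrite it, modulo a term in $\dom A$, as something of the form $(\beta-A_{-1})^{-1}\bigl(\dots\bigr)$ applied to $e$ and $\dot e$; this is the standard argument showing that the convolution takes values in $\Xscr$ and is in fact $C^1$ into $\Xscr$ whenever the input is $H^1_\loc$ and the initial compatibility condition $\sbm{x_0\\e(0)}\in\dom\SmallSysNode$ holds, which guarantees $x(0)=x_0\in\dom A+\text{(bounded correction)}$ lands correctly so that $\dot x(0)$ exists in $\Xscr$. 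Concretely one checks $x(t)\in\Xscr$ with $\sbm{x(t)\\e(t)}\in\dom\SmallSysNode$ for all $t$, and that $t\mapsto x(t)$ is continuously differentiable into $\Xscr$ with $\dot x(t)=A_{-1}x(t)+Be(t)=\AB\sbm{x(t)\\e(t)}$. Once $\sbm{x(t)\\e(t)}\in\dom\SmallSysNode$ and this pair is continuous into $\dom\SmallSysNode$ (with the graph norm) and differentiable into $\sbm{\Xscr\\\Escr}$, one simply \emph{defines} $f(t):=\CD\sbm{x(t)\\e(t)}$; closedness of $\SmallSysNode$ together with the $C^1$ regularity of $\sbm{x(\cdot)\\e(\cdot)}$ in the graph norm then yields $f\in C^1(\rplus;\Fscr)\subset H^1_\loc(\rplus;\Fscr)$, and \eqref{eq:LTI} holds by construction.

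I expect the main obstacle to be the regularity bookkeeping for the convolution term: showing that $\int_0^t\T_{-1}(t-s)Be(s)\ud s$ genuinely lies in $\Xscr$ (not merely $\Xscr_{-1}$) and depends $C^1$-ly on $t$ requires the integration-by-parts trick and careful use of the hypothesis $\sbm{x_0\\e(0)}\in\dom\SmallSysNode$ to control boundary terms at $s=0$; without the compatibility condition one only gets a mild solution in $\Xscr_{-1}$. Since, however, this is precisely the content of \cite[Thm 4.6.11]{StafBook}, I would not reprove it from scratch but rather note that $\SmallSysNode$ being a system node is exactly the hypothesis of that theorem, and cite it. In the write-up I would therefore state: \emph{This is \cite[Thm 4.6.11]{StafBook}; the quoted regularity of $x$ and $f$ is obtained via the variation-of-constants formula above together with the compatibility condition $\sbm{x_0\\e(0)}\in\dom\SmallSysNode$.}
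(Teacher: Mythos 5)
Your proposal is correct and takes essentially the same route as the paper, which offers no proof of its own but simply cites \cite[Thm 4.6.11]{StafBook}; your sketch of the variation-of-constants formula in the extrapolation space, the integration-by-parts argument, and the role of the compatibility condition $\sbm{x_0\\e(0)}\in\dom{\SmallSysNode}$ is the standard content of that reference. One small inaccuracy: since $e$ is only assumed to lie in $H^1_{\loc}(\rplus;\Escr)$, the pair $\sbm{x(\cdot)\\e(\cdot)}$ is merely $H^1_{\loc}$ (not $C^1$) in the graph norm of $\SmallSysNode$, so the output $f=\CD\sbm{x(\cdot)\\e(\cdot)}$ is obtained in $H^1_{\loc}(\rplus;\Fscr)$ rather than in $C^1$ --- which is exactly the regularity the theorem asserts.
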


The triple $(e,x,f)$ in Thm \ref{thm:solution} is called a  \emph{classical solution} of \eqref{eq:LTI}. A system node is \emph{impedance passive} if it is a Dirac node; see Thm 4.2 and Lemma 4.3 in \cite{StafPassI}. It is \emph{well-posed} if there are $t>0$ and $K_t$, such that all classical solutions satisfy
\begin{equation}\label{eq:WP}
	\|x(t)\|^2+\int_0^t\|f(s)\|^2\ud s \leq
	K_t\left( \|x(0)\|^2+ \int_0^t\|e(s)\|^2\ud s \right);
\end{equation}
if this inequality holds with $K_t=1$, then the system node is said to be \emph{scattering passive} \cite{StafBook}.

\section{Time-varying port-Hamiltonian systems}\label{sec:TVpH}

In order to define the time-varying port-Hamiltonian system associated to a Dirac node, we introduce two functions $P,G$, defined on $\rplus$, with values being bounded operators on the state space, which have the following properties: For all $t\geq0$:
\begin{equation}\label{eq:standing2}
\begin{aligned}
&\bullet P(t)=P(t)^*\geq0, \\
&\bullet \text{$P(t)$ has an inverse in $\Lscr(\Xscr)$,
and for all $z\in \Xscr$:}\\
& \bullet P(\cdot)z\in C^2(\rplus;\Xscr) \text{ and} \\
&\bullet P(\cdot)^{-1}z,\,G(\cdot)z\in C^1(\rplus;\Xscr).
\end{aligned}
\end{equation}

Here $\Lscr(\Xscr)$ is the space of \emph{bounded} linear operators on $\Xscr$.

\begin{definition}
Let $\SmallSysNode$ be a Dirac node and let $P(\cdot)$ and $G(\cdot)$ be as above. By the associated \emph{time-varying port-Hamiltonian system}, we mean the system
\begin{equation}\label{eq:TVpH}
	\bbm{\dot x(t)\\f(t)} = \left(\SysNode
		+\bbm{G(t)P(t)^{-1}&0\\0&0}\right)
	\bbm{P(t)x(t)\\e(t)},
\end{equation}
 $t\geq0$. The associated \emph{time-dependent Hamiltonian} is $H(x,t):=\Ipdp{P(t)x}{x}_\Xscr$.
 
By a \emph{classical solution} of \eqref{eq:TVpH} on $\rplus$, we mean a triple $(e,x,f)\in H^1_{loc}(\rplus;\Escr)\times C^1(\rplus;\Xscr)\times H^1_{loc}(\rplus;\Fscr)$, such that $\sbm{P(t)x(t)\\e(t)}\in\dom\SmallSysNode$ and \eqref{eq:TVpH} holds for all $t\geq0$.
\end{definition}

The multiplicative perturbation $P(t)$ before $x(t)$ in \eqref{eq:TVpH} encodes time-varying physical parameters of the system. The additive perturbation $G(t)P(t)^{-1}$ collects time derivatives of physical parameters (which do not appear in the time invariant case), and in the case of a wave equation, it can also encode a time-varying damper inside of the spatial domain. In Thm \ref{thm:bibo} below, $G$ will be used in a perturbation argument, in order to prove an existence result for classical solutions of \eqref{eq:TVpH}.

\begin{proposition}\label{prop:Power}
Assuming \eqref{eq:standing2}, every classical solution of \eqref{eq:TVpH} is uniquely determined by the initial state $x(0)$ and the effort signal $e$. For such solutions, the power inequality
\begin{equation}\label{eq:pHpower}
\begin{aligned}
	\ddt H(x,t)&\leq 2\re\Ipdp{f(t)}{e(t)}_{\Fscr,\Escr}+\Ipdp{\dot P(t)x(t)}{x(t)}\\
	&\quad +2\re\Ipdp{P(t)x(t)}{G(t)x(t)},\quad t\geq0,
\end{aligned}
\end{equation}
holds, with equality if $\sbm{\AB\\-\psi\CD}\subset-\sbm{\AB\\-\psi\CD}^*$.
\end{proposition}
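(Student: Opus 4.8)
The plan is to prove the power inequality \eqref{eq:pHpower} first, for an \emph{arbitrary} classical solution, and then to deduce the uniqueness statement from it by a Gr\"onwall argument applied to the difference of two solutions.

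For the power inequality I would fix a classical solution $(e,x,f)$, put $\Hscr(t):=H(x(t),t)=\Ipdp{P(t)x(t)}{x(t)}$, and differentiate in $t$. By \eqref{eq:standing2} and the uniform boundedness principle, $t\mapsto P(t)$ is strongly $C^1$ with a locally norm-bounded $\Lscr(\Xscr)$-valued derivative $\dot P(\cdot)$, so the product rule together with $P(t)=P(t)^*$ gives
\begin{equation*}
	\ddt\Hscr(t)=\Ipdp{\dot P(t)x(t)}{x(t)}+2\re\Ipdp{P(t)x(t)}{\dot x(t)}.
\end{equation*}
Into the last inner product I would substitute $\dot x(t)=\AB\sbm{P(t)x(t)\\e(t)}+G(t)x(t)$ from \eqref{eq:TVpH}; the $G$-part contributes $2\re\Ipdp{P(t)x(t)}{G(t)x(t)}$, one of the summands on the right of \eqref{eq:pHpower}. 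For the remaining part, write $\sbm{\xi\\\eta}:=\sbm{P(t)x(t)\\e(t)}\in\dom\SmallSysNode$; then maximal dissipativity of $\sbm{\AB\\-\psi\CD}$ on $\sbm{\Xscr\\\Escr}$ gives $\re\Ipdp{\AB\sbm{\xi\\\eta}}{\xi}\leq\re\Ipdp{\psi\CD\sbm{\xi\\\eta}}{\eta}_\Escr$, and the duality-map identity $\Ipdp{\psi g}{e}_\Escr=\Ipdp{g}{e}_{\Fscr,\Escr}$ rewrites the right-hand side as $\re\Ipdp{\CD\sbm{\xi\\\eta}}{\eta}_{\Fscr,\Escr}=\re\Ipdp{f(t)}{e(t)}_{\Fscr,\Escr}$. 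Collecting the three terms yields \eqref{eq:pHpower}. If moreover $\sbm{\AB\\-\psi\CD}\subset-\sbm{\AB\\-\psi\CD}^*$, this operator is skew-symmetric, so the dissipativity estimate holds with equality on all of $\dom\SmallSysNode$ and \eqref{eq:pHpower} becomes an equality as well.

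For uniqueness, let $(e,x_1,f_1)$ and $(e,x_2,f_2)$ be classical solutions with $x_1(0)=x_2(0)$. Because \eqref{eq:TVpH} is linear and $\dom\SmallSysNode$ is a linear subspace, $(0,x,f)$ with $x:=x_1-x_2$ and $f:=f_1-f_2$ is again a classical solution, now with zero effort. Since its effort vanishes, \eqref{eq:pHpower} reduces to
\begin{equation*}
	\ddt\Hscr(t)\leq\Ipdp{\dot P(t)x(t)}{x(t)}+2\re\Ipdp{P(t)x(t)}{G(t)x(t)},\qquad t\geq0.
\end{equation*}
On any interval $[0,T]$ the norms $\|P(t)\|$, $\|P(t)^{-1}\|$, $\|\dot P(t)\|$ and $\|G(t)\|$ are bounded (again by \eqref{eq:standing2} and the uniform boundedness principle), and a positive $P(t)$ that is invertible in $\Lscr(\Xscr)$ is coercive, so $\|x(t)\|^2\leq\|P(t)^{-1}\|\,\Hscr(t)$. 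Hence $\ddt\Hscr(t)\leq C_T\,\Hscr(t)$ on $[0,T]$ for some constant $C_T$, and since $\Hscr(0)=\Ipdp{P(0)x(0)}{x(0)}=0$, Gr\"onwall's inequality forces $0\leq\Hscr(t)\leq\Hscr(0)e^{C_Tt}=0$ there. As $T$ is arbitrary, $\Hscr\equiv0$, so $x\equiv0$ by coercivity, and then $f(t)=\CD\sbm{P(t)x(t)\\0}=\CD\sbm{0\\0}=0$ for every $t$; thus $x_1=x_2$ and $f_1=f_2$.

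I expect the bookkeeping with inner products to be routine, and the only points requiring care to be the product rule for $t\mapsto P(t)x(t)$ and the local operator-norm bounds on $P$, $P^{-1}$, $\dot P$ and $G$, both of which come from \eqref{eq:standing2} via the uniform boundedness principle. What is worth emphasising is conceptual rather than computational: passivity alone --- the power inequality --- already yields uniqueness of classical solutions, with no appeal to semigroup generation and without using $G$ beyond its local boundedness.
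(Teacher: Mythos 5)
Your proof is correct, but it takes a genuinely more direct route to the power inequality than the paper does. The paper does not prove \eqref{eq:pHpower} inside the proof of Prop.~\ref{prop:Power} at all: it defers it, obtaining it later as a consequence of the scattering-side power inequality \eqref{eq:ScattPow} (quoted from \cite[Thm IV.3]{Kur20}) pulled back through the external Cayley transform \eqref{eq:diag}. You instead differentiate $\Ipdp{P(t)x(t)}{x(t)}$ and feed the dynamics into the dissipativity estimate $\re\Ipdp{\AB\sbm{\xi\\\eta}}{\xi}\leq\re\Ipdp{\psi\CD\sbm{\xi\\\eta}}{\eta}_{\Escr}$ for $\sbm{\xi\\\eta}\in\dom{\SmallSysNode}$, together with the duality identity for $\psi$; this is self-contained, uses only dissipativity (not maximality, and none of the well-posedness machinery of \S\ref{sec:Scatt}), and handles the equality case cleanly from $\sbm{\AB\\-\psi\CD}\subset-\sbm{\AB\\-\psi\CD}^*$. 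For uniqueness the two arguments are essentially the same: the paper specializes dissipativity to the main operator $A$ and shows that $v(t)=e^{-\sigma t}H(w(t),t)$ is nonincreasing, which is your Gr\"onwall argument with the exponential weight written out explicitly; your version, deriving the differential inequality $\ddt\Hscr\leq C_T\Hscr$ from the already-proved power inequality with zero effort, is a slightly cleaner logical organization. The only points to keep an eye on --- the strong product rule for $t\mapsto P(t)x(t)$ and the local uniform bounds on $P$, $P^{-1}$, $\dot P$, $G$ via the uniform boundedness principle --- you have identified and handled exactly as the paper does.
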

\begin{proof}
The power inequality will be motivated after Thm \ref{thm:Sigmar}. In order to obtain uniqueness, we first observe that the main operator $A$ of a Dirac node is dissipative: for $x\in\dom A$:
$$
	2\re\Ipdp{Ax}{x}=
	2\re\Ipdp{\bbm{\AB\\-\psi\CD}\bbm{x\\0}}{\bbm{x\\0}}\leq0.
$$

From \eqref{eq:standing2} and the uniform boundedness principle, it follows that $P(\cdot)$, $P(\cdot)^{-1}$, $\dot P(\cdot)$, and $G(\cdot)^*$ are all uniformly bounded on the compact subinterval $[0,T]\subset \rplus$, where $T>0$ is fixed arbitrarily. Denote a common bound for these by $U$. 

Now let $(e,x,f)$ and $(e,z,g)$ be classical solutions of \eqref{eq:TVpH} with $x(0)=z(0)$ and set $w:=x-z$. Then
$$
	\dot w(t)=\big(AP(t)+G(t)\big)w(t),\quad t\geq0,\quad w(0)=0.
$$
Next, we define the auxiliary function
$$
	v(t):=e^{-\sigma t}H(w(t),t\big)\geq0,\quad t\geq0,
$$
where $	\sigma:= 2U^3+U^2$. Then $v(0)=0$ and we next prove that $\dot v(t)\leq0$ for all $t\in[0,T]$; then $w=0$, i.e., $x=z$, and consequently, $f=g$ by \eqref{eq:TVpH}, on $\rplus$. Indeed, for $t\in [0,T]$,
$$
	\Ipdp{P(t)w(t)}{w(t)}\geq \|w(t)\|^2/\|P(t)^{-1}\|,
	\qquad\text{and then}
$$
$$
\begin{aligned}
	e^{\sigma t}\dot v(t) &= \Big(2\re\!\Ipdp{P(t)w(t)}{\dot w(t)}
	+\Ipdp{ \dot P(t)w(t)}{w(t)}\\
	&\quad~ -\sigma\Ipdp{P(t)w(t)}{w(t)}\Big) \\
	&= \big(2\re\!\Ipdp{P(t)w(t)}{AP(t)w(t)} 
	-\sigma\Ipdp{P(t)w(t)}{w(t)} \\
	&\quad~+2\re\!\Ipdp{P(t)w(t)}{G(t)w(t)}+\langle\dot P(t)w(t),w(t)\rangle\big)
\end{aligned}
$$
is non-positive due to the dissipativity of $A$.
\end{proof}

Within the port-Hamiltonian framework, the external efforts and flows can be combined into inputs and outputs in a flexible way, and we will next consider the following \emph{external scattering representation} of a port-Hamiltonian system, which has better solvability properties than the \emph{impedance representation} \eqref{eq:TVpH}: First fix some $\beta\in\cplus$ such that \eqref{eq:InjBeta} is injective. Then pick the input $u$ to the system and the output $y$ as
\begin{equation}\label{eq:diag}
\begin{aligned}
	\bbm{u\\y}:=&\,\frac{1}{\sqrt{2\re\beta}} 
	\bbm{\beta I & \psi \\ \overline\beta I&-\psi}\bbm{e\\f}
	\quad\iff\\
	\bbm{e\\f}=&\,\frac{1}{\sqrt{2\re\beta}} 
	\bbm{I & I \\ \overline\beta\psi^*&-\beta\psi^*}\bbm{u\\y}.
\end{aligned}
\end{equation}
The reader may verify that the resulting transform of \eqref{eq:TVpH} is
\begin{equation}\label{eq:TVpHdiag}
	\bbm{\dot x(t)\\y(t)} = \left(\SysNode^\times
		+\bbm{G(t)P(t)^{-1}&0\\0&0}\right)
	\bbm{P(t)x(t)\\u(t)},
\end{equation}
$t\geq0$, where
$$
\begin{aligned}
	\SysNode^\times&\!:=
	\bbm{\sqrt{2\re\beta}\AB\\\bbm{0&\overline\beta I}-\psi\CD} \\
	&\qquad \times
	\bbm{\sqrt{2\re\beta}\bbm{I&0} \\ \bbm{0&\beta I}+\psi\CD}^{-1},\\
	\dom{\SmallSysNode^\times} &:=
	\bbm{\sqrt{2\re\beta}\bbm{I&0} \\ \bbm{0&\beta I}+\psi\CD}\dom{\SmallSysNode},
\end{aligned}
$$
is the usual \emph{external Cayley transform} (or \emph{diagonal transform} \cite[(6.2)]{StafMTNS02}) of the Dirac node $\SmallSysNode$. Hence, the time-varying parameters $G(t),P(t)$ of the port-Hamiltonian system do not interact with the external Cayley transform.

\begin{theorem}\label{thm:Sdiag}
For $\SmallSysNode$ a Dirac node, $\SmallSysNode^\times$ is a scattering passive system node. There exist a Hilbert space $\Xscr_{-1}\supset \Xscr$, and operators $A_{-1}^\times\in\Lscr(\Xscr,\Xscr_{-1})$, $B^\times\in\Lscr(\Escr,\Xscr_{-1})$, $\overline C^\times:\Xscr\supset\dom{\overline C^\times}\to\Escr$ and $D^\times\in\Lscr(\Escr)$, such that
\begin{equation}\label{eq:SdiagCompat}
	\SysNode^\times=\bbm{A_{-1}^\times&B^\times \\
		\overline C^\times&D^\times}\bigg|
		_{\dom{\SmallSysNode^\times}}.
\end{equation}
\end{theorem}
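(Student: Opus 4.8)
The plan is to establish the claim in two stages: first show that $\SmallSysNode^\times$ is a scattering passive system node, and then extract the operator block decomposition~\eqref{eq:SdiagCompat} from the system node structure. For the first stage, the key observation is that the external Cayley transform is exactly the classical correspondence between (maximal) dissipative colligations and (completely) contractive colligations. Since $\SmallSysNode$ is a Dirac node, $\sbm{\AB\\-\psi\CD}$ is maximal dissipative on $\sbm{\Xscr\\\Escr}$, and the injectivity of~\eqref{eq:InjBeta} for the chosen $\beta$ is precisely what guarantees that the operator $\sbm{\sqrt{2\re\beta}\sbm{I&0}\\\sbm{0&\beta I}+\psi\CD}$ is injective, so that $\SmallSysNode^\times$ is a well-defined (single-valued) operator with the stated domain. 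I would then invoke the standard fact (e.g.\ from \cite{StafBook}, \cite{StafMTNS02}, or the passivity references \cite{StafPassI} already cited in the excerpt) that the external Cayley transform of a maximal dissipative system node is a scattering passive system node, and conversely. Concretely, the three defining properties of a system node must be checked for $\SmallSysNode^\times$: closedness (inherited because the Cayley transform is implemented by bounded, boundedly invertible operators acting on the graph), the main operator generating a $C_0$-semigroup of contractions (this follows because the main operator of $\SmallSysNode^\times$ is a maximal dissipative operator on $\Xscr$ obtained from the dissipative main operator of $\SmallSysNode$ — here one uses that the resolvent identity transforms correctly), and the domain condition that every $u\in\Escr$ is the second component of some element of $\dom{\SmallSysNode^\times}$ (which translates back, via the transform, to the surjectivity built into the Dirac/maximal-dissipative structure). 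The scattering passivity estimate~\eqref{eq:WP} with $K_t=1$ then follows from the dissipation inequality for the original impedance-passive node combined with the algebraic identity underlying~\eqref{eq:diag}, namely $\|u\|^2-\|y\|^2 = 2\re\ipdp{f}{e}_{\Fscr,\Escr}$, integrated along classical solutions using Proposition~\ref{prop:Power} in the time-invariant case ($G=0$, $P=I$).

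For the second stage, once $\SmallSysNode^\times$ is known to be a system node, the block decomposition~\eqref{eq:SdiagCompat} is the canonical ``rigged space'' representation of an arbitrary system node: one constructs $\Xscr_{-1}$ as the completion of $\Xscr$ with respect to the norm $\|(\alpha-A^\times)^{-1}\cdot\|_\Xscr$ for some $\alpha$ in the resolvent set of the main operator $A^\times$, extends $A^\times$ to $A_{-1}^\times\in\Lscr(\Xscr,\Xscr_{-1})$, defines the control operator $B^\times\in\Lscr(\Escr,\Xscr_{-1})$ from the action of $\AB^\times$ on $\dom{\SmallSysNode^\times}$ modulo $\dom A^\times$, lets $\overline{C^\times}$ and $D^\times$ be read off from the $\CD^\times$-row (with $D^\times$ bounded because the Cayley-transformed output map is, in the scattering case, automatically a bounded feedthrough), and checks that $\sbm{A_{-1}^\times&B^\times\\\overline{C^\times}&D^\times}$ restricted to $\dom{\SmallSysNode^\times}=\set{\sbm{x\\u}\mid A_{-1}^\times x+B^\times u\in\Xscr}$ reproduces $\SmallSysNode^\times$. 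This is \cite[Lemma 4.7.7 / \S4.7]{StafBook} applied verbatim, so it requires no new argument.

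The main obstacle I anticipate is verifying cleanly that the main operator of $\SmallSysNode^\times$ generates a $C_0$-semigroup — i.e.\ that after the external Cayley transform the ``unbounded part'' is still correctly aligned and maximal dissipative. One has to be careful that the block $\sqrt{2\re\beta}\AB$ composed with the inverse of $\sbm{\sqrt{2\re\beta}\sbm{I&0}\\\sbm{0&\beta I}+\psi\CD}$ genuinely has a main operator whose domain is dense and whose range is all of $\Xscr$ for some right half-plane of spectral parameters; this is where the specific choice of $\beta$ making~\eqref{eq:InjBeta} injective, together with maximal dissipativity, does the work, and it is the one place where a short computation (relating $(\lambda-A^\times)^{-1}$ to the behaviour of $\SmallSysNode$ on its domain) is unavoidable. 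Everything else is bookkeeping with the transform~\eqref{eq:diag} and citation of the standard system-node toolbox.
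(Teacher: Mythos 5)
Your first stage follows essentially the same route as the paper: the paper notes that maximal dissipativity of $\sbm{\AB\\-\psi\CD}$ makes $\SmallSysNode^\times$ closed and maximal scattering dissipative, and then cites \cite[Thm 2.5]{Sta13} for the conclusion that such an operator is a scattering passive system node. Your direct verification of the three system-node axioms together with the energy identity $\|u\|^2-\|y\|^2=2\re\ipdp{f}{e}_{\Fscr,\Escr}$ is that citation unpacked, and your observation that the injectivity of \eqref{eq:InjBeta} is what makes the transform single-valued on the stated domain is correct. The difference here is cosmetic.

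The second stage, however, contains a genuine gap. You assert that \eqref{eq:SdiagCompat} is ``the canonical rigged-space representation of an arbitrary system node'' and that \cite[Lemma 4.7.7]{StafBook} applies ``verbatim, requiring no new argument.'' That is not so: for an arbitrary system node only the \emph{top} row decomposes canonically as $A_{-1}^\times x+B^\times u$; the combined observation/feedthrough operator $\CD^\times$ does \emph{not} in general split as $\overline C^\times x+D^\times u$ with $D^\times\in\Lscr(\Escr)$. That splitting is exactly the \emph{compatibility} of the node, which fails for general system nodes, and the paper obtains it from two nontrivial inputs you do not supply: every scattering passive system node induces a well-posed system, and well-posed systems on \emph{Hilbert} spaces are compatible \cite[Thm 5.1.12]{StafBook}. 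Your parenthetical claim that $D^\times$ is ``automatically a bounded feedthrough in the scattering case'' gestures at the right fact but gives no mechanism; note also that the paper explicitly remarks that $\overline C^\times$ and $D^\times$ are not unique, which already shows the decomposition cannot be the canonical construction you describe. To close the gap you must route through well-posedness and the Hilbert-space compatibility theorem (or an equivalent argument via boundedness of the transfer function on some right half-plane).
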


The space $\Xscr_{-1}$ above is called the \emph{extrapolation space}; see \cite[\S3.6]{StafBook}. The operators $\overline C^\times$ and $D^\times$ are unfortunately not necessarily unique. For more details, see \cite[\S5.1]{StafBook}.

\begin{proof}
As $\sbm{\AB\\-\psi\CD}$ is maximal dissipative, it is closed. Then it follows from \eqref{eq:diag} that $\SmallSysNode^\times$ is closed and maximal scattering dissipative in the sense of \cite{Sta13}; hence $\SmallSysNode^\times$ is a scattering-passive system node by \cite[Thm 2.5]{Sta13}.

By \cite[Lemma 11.1.3]{StafBook}, all scattering-passive system nodes induce well-posed systems, and since we work on Hilbert spaces, $\SmallSysNode^\times$ is then compatible by \cite[Thm 5.1.12]{StafBook}, and so \eqref{eq:SdiagCompat} holds. 
\end{proof}

We will next see how the system \eqref{eq:TVpHdiag} can be associated to a \emph{time-varying well-posed system} in the sense of \cite{Kur20}.

\section{Time-varying well-posed systems}\label{sec:Scatt}

In the time-invariant case, one observes that \eqref{eq:WP} expresses that the final state and the output signal are both bounded by the inital state and the input signal, and one then defines families of bounded operators $\T,\Phi,\Psi,\F$, such that $\sbm{\T^t&\Phi^t\\\Psi^t&\F^t}$ maps $\sbm{x(0)\\e|_{[0,t]}}$ into $\sbm{x(t)\\f|_{[0,t]}}$. In the time-varying case, one gets:

\begin{definition}\label{def:WP}
A \emph{time-varying well-posed system} on $\rplus$, with Hilbert input, state and output spaces $(\Uscr,\Xscr,\Yscr)$, is a quadruple of linear operator families defined for $(t,r)\in \Delta:=\set{(s,\sigma)\in \R_+^2\mid s\geq\sigma}$, mapping
$$
\begin{aligned}
	\T(t,r)&:\Xscr\to \Xscr, \quad \F(t,r):L^2(\rplus;\Uscr)\to L^2(\rplus;\Yscr),\\
	\quad\Phi(t,r)&:L^2(\rplus;\Uscr)\to \Xscr,\quad  \Psi(t,r):\Xscr\to L^2(\rplus;\Yscr),
\end{aligned}
$$
boundedly, which have the following additional properties:
\begin{enumerate}
\item $\T$ is an evolution family on $\Xscr$ with time interval $\rplus$,
\item the other families are \emph{causal} in the sense that
$$
\begin{aligned}
	\Phi(t,r)&=\Phi(t,r)\proj_{[t,r]},\\ 
	\Psi(t,r)&=\proj_{[t,r]}\Psi(t,r)\qquad \text{and} \\
	\F(t,r)&=\proj_{[t,r]}\F(t,r) =\F(t,r)\proj_{[t,r]},
\end{aligned}
$$
for $(t,r)\in\Delta$,
\item all four families are locally uniformly bounded,
\item and they encode the linearity of the system, so that for all $t\geq s\geq r\geq0$:
\begin{equation}\label{eq:WPlin}
\begin{aligned}
	\Phi(t,r) &= \Phi(t,s)+\T(t,s)\Phi(s,r), \\
	\Psi(t,r) &= \Psi(t,s)\T(s,0)+\Psi(s,r), \\
	\F(t,r) &= \F(t,s)+\F(s,r) + \Psi(t,s)\Phi(s,r).
\end{aligned}
\end{equation}
\end{enumerate}
\end{definition}

By a \emph{mild solution} of a well-posed system on $\rplus$ with initial state $x_0\in\Xscr$ at time $0$ and input $u\in L_{loc}^2(\rplus;\Uscr)$, we mean the triple $(u,x,y)\in L_{loc}^2(\rplus;\Uscr)\times C(\rplus;\Xscr)\times L_{loc}^2(\rplus;\Yscr)$:
\begin{equation}\label{eq:mildsol}
\begin{aligned}
	x(t)&=\T(t,0)x_0+\Phi(t,0)u\qquad \text{and} \\
	\proj_{[0,t]}y&=\Psi(t,0) x_0+\F(t,0)u,\qquad t\geq0;
\end{aligned}
\end{equation}
see \cite[p.\ 282]{SchWe10}. \emph{Classical solutions} of $\sbm{\T&\Phi\\\Psi&\F}$ are mild solutions with the smoothness required from a classical solution. 

In the time-invariant case, every classical solution of the system node is a mild solution of the well-posed system, and conversely, every mild solution which has the required additional smoothness is automatically a classical solution of the associated system node \cite[Thm 4.6.11]{StafBook}. The existence of classical solutions of \eqref{eq:TVpHdiag}, and their relationship with smooth mild solutions of the associated well-posed system $\sbm{\T&\Phi\\\Psi&\F}$, are in general much less clear than in the time-invariant case. Since the standing assumptions \eqref{eq:standing2} contain some extra smoothness, this connection becomes a bit less obscure: Define
\begin{equation}\label{eq:Vtimes}
\begin{aligned}
	\Vscr^\times&:=\bigg\{\bbm{x_0\\u}\in
	\bbm{\Xscr\\H_{loc}^1(\rplus;\Escr)}\biggmid \\ 
	&\qquad\quad \bbm{P(0)x_0\\u(0)}\in\dom{\SmallSysNode^\times}\bigg\}.
\end{aligned}
\end{equation}

The following theorem shows that, under assumptions \eqref{eq:standing2}, the time-varying well-posed system, perhaps surprisingly, has the same nice solvability properties as the time-invariant well-posed system in Thm \ref{thm:solution}; also the smooth mild solutions are the same as classical solutions. The theorem applies to all scattering passive system nodes, not only to those arising as an external Cayley transform of some Dirac node. 

\begin{theorem}\label{thm:Sigmar}
Let $\Sigma^\times$ be a scattering passive system node and assume \eqref{eq:standing2}. Then there is a time-varying well-posed system $\sbm{\T&\Phi\\\Psi&\F}$ with time interval $\rplus$, such that:
\begin{enumerate}
\item For every $\sbm{x_0\\u}\in\Vscr^\times$, there is a unique classical solution $(u,x,y)$ of \eqref{eq:TVpHdiag} on $\rplus$ with $x(0)=x_0$ and the given input signal $u$. The corresponding output satisfies $y\in H_{loc}^1(\rplus;\Yscr)$, and $(u,x,y)$ is also the unique mild solution of $\sbm{\T&\Phi\\\Psi&\F}$ with $x(0)=x_0$ and the given input signal $u$. 

\item Every classical solution of \eqref{eq:TVpHdiag} on $\rplus$ satisfies 
the power inequality
\begin{equation}\label{eq:ScattPow}
\begin{aligned}
	& \ddt \Ipdp{P(t)x(t)}{x(t)}_\Xscr\leq \Ipdp{\dot P(t)x(t)}{x(t)}_\Xscr\\
	&\qquad +2\re\Ipdp{P(t)x(t)}{G(t)x(t)}_\Xscr\\
	&\qquad+\|u(t)\|_\Uscr^2-\|y(t)\|_\Yscr^2,\quad t\geq0.
\end{aligned}
\end{equation}
Equality holds in \eqref{eq:ScattPow} if $\sbm{\AB\\-\psi\CD}\subset-\sbm{\AB\\-\psi\CD}^*$.
\end{enumerate}
\end{theorem}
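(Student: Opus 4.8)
The plan is to address the two parts of the theorem by different routes: part~(2) and the uniqueness assertion in part~(1) follow from passivity and a Gronwall argument and are close to Prop~\ref{prop:Power}, whereas the construction of the time-varying well-posed system $\sbm{\T&\Phi\\\Psi&\F}$ and the existence of classical solutions are the new content and will occupy most of the work. For the latter, the first step is to recast \eqref{eq:TVpHdiag} as an abstract non-autonomous Cauchy problem \emph{with time-independent domain}. By Thm~\ref{thm:Sdiag}, $\Sigma^\times$ has the compatible representation \eqref{eq:SdiagCompat}; write $\Sigma^\times\sbm{v\\w}=\sbm{\Ascr^\times\sbm{v\\w}\\\Cscr^\times\sbm{v\\w}}$ for $\sbm{v\\w}\in\dom{\SmallSysNode^\times}$, let $A^\times v:=\Ascr^\times\sbm{v\\0}$ be its main operator (a generator of a contraction semigroup, with $\dom{A^\times}=\set{v\in\Xscr\mid\sbm{v\\0}\in\dom{\SmallSysNode^\times}}$), and substitute $v(t):=P(t)x(t)$. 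If $(u,x,y)$ is a classical solution of \eqref{eq:TVpHdiag}, then $\Ascr^\times\sbm{P(t)x(t)\\u(t)}\in\Xscr$, and differentiating $v=P(\cdot)x$ together with \eqref{eq:TVpHdiag} gives
$$
\dot v(t)=P(t)\,\Ascr^\times\bbm{v(t)\\u(t)}+L_0(t)v(t),\qquad y(t)=\Cscr^\times\bbm{v(t)\\u(t)},
$$
with $L_0(t):=\dot P(t)P(t)^{-1}+P(t)G(t)P(t)^{-1}\in\Lscr(\Xscr)$ and $\sbm{v(t)\\u(t)}\in\dom{\SmallSysNode^\times}$, so that the homogeneous generator $L(t):=P(t)A^\times+L_0(t)$ carries the \emph{fixed} domain $\dom{A^\times}$.

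Next I would show that $\set{L(t)}$ is a stable family of generators and construct its evolution family. For fixed $t$, since $P(t)=P(t)^*$ is coercive, $\Ipdp{P(t)^{-1}\cdot}{\cdot}_\Xscr$ is an inner product equivalent to that of $\Xscr$, and $\re\Ipdp{P(t)^{-1}P(t)A^\times v}{v}=\re\Ipdp{A^\times v}{v}\le0$; moreover $\lambda I-P(t)A^\times=P(t)\bigl(\lambda P(t)^{-1}-A^\times\bigr)$ is boundedly invertible for a suitable $\lambda>0$, because $\lambda P(t)^{-1}-A^\times$ is a bounded perturbation of $\mu I-A^\times$ with $\mu>0$ large. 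Hence $P(t)A^\times$ generates a contraction semigroup in the equivalent norm, so a $C_0$-semigroup on $\Xscr$, and adding the bounded $L_0(t)$ preserves this. By \eqref{eq:standing2} and the uniform boundedness principle — exactly as in the proof of Prop~\ref{prop:Power} — $\norm{P(\cdot)}$, $\norm{P(\cdot)^{-1}}$, $\norm{\dot P(\cdot)}$, $\norm{G(\cdot)^*}$ are bounded uniformly on each $[0,T]\subset\rplus$, so the equivalent norms are uniformly equivalent there and $\set{L(t)}_{t\in[0,T]}$ is a stable family with common domain $\dom{A^\times}$; \eqref{eq:standing2} also gives $t\mapsto L(t)v\in C^1(\rplus;\Xscr)$ for $v\in\dom{A^\times}$ (the extra regularity $P(\cdot)z\in C^2$ enters when $L(\cdot)v$ is differentiated once more to bootstrap the regularity of the output). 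These are precisely the hypotheses under which the theory of time-varying well-posed systems of \cite{Kur20} — or, classically, of evolution equations with constant domain — produces a unique strongly continuous evolution family $U(\cdot,\cdot)$ on $\Xscr$ solving $\dot v=L(t)v$ in the classical sense for data in $\dom{A^\times}$.

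Given $U(\cdot,\cdot)$, I would set $\T(t,r):=P(t)^{-1}U(t,r)P(r)$, which governs the $x$-dynamics, and define $\Phi$, $\Psi$, $\F$ from $U(\cdot,\cdot)$ and the bounded operators $B^\times,\overline C^\times,D^\times$ by the variation-of-constants and duality formulas familiar from the time-invariant theory; properties~(1)--(4) of Definition~\ref{def:WP} (evolution family, causality, local uniform boundedness, and the identities \eqref{eq:WPlin}) then follow directly from these formulas and the evolution property of $U$. For $\sbm{x_0\\u}\in\Vscr^\times$ the data $v(0)=P(0)x_0\in\dom{A^\times}$ and $u\in H^1_{loc}(\rplus;\Escr)$ feed the construction and yield a classical solution $(u,x,y)$ of \eqref{eq:TVpHdiag} with $x(0)=x_0$; the regularity in \eqref{eq:standing2} gives $y\in H^1_{loc}(\rplus;\Yscr)$, by construction $(u,x,y)$ is the mild solution of $\sbm{\T&\Phi\\\Psi&\F}$ with those data, and conversely a mild solution of the stated smoothness reproduces \eqref{eq:TVpHdiag}. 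Uniqueness of the classical solution is inherited from Prop~\ref{prop:Power}, since \eqref{eq:diag} is a bounded bijection carrying classical solutions of \eqref{eq:TVpHdiag} onto classical solutions of \eqref{eq:TVpH}.

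Finally, part~(2): for any classical solution $(u,x,y)$ of \eqref{eq:TVpHdiag} and $t\ge0$,
$$
\ddt\Ipdp{P(t)x(t)}{x(t)}_\Xscr=\Ipdp{\dot P(t)x(t)}{x(t)}_\Xscr+2\re\Ipdp{P(t)x(t)}{\dot x(t)}_\Xscr,
$$
and inserting $\dot x(t)=\Ascr^\times\sbm{P(t)x(t)\\u(t)}+G(t)x(t)$ turns the last term into $2\re\bigipd{\Ascr^\times\sbm{P(t)x(t)\\u(t)}}{P(t)x(t)}+2\re\Ipdp{P(t)x(t)}{G(t)x(t)}_\Xscr$. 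Scattering passivity of $\Sigma^\times$, in the differential form $2\re\bigipd{\Ascr^\times\sbm{v\\w}}{v}\le\norm{w}^2-\bignorm{\Cscr^\times\sbm{v\\w}}^2$ for $\sbm{v\\w}\in\dom{\SmallSysNode^\times}$ (see \cite{Sta13}), applied at $\sbm{v\\w}=\sbm{P(t)x(t)\\u(t)}$ — whose node output is $y(t)$ — then yields \eqref{eq:ScattPow}; and when $\sbm{\AB\\-\psi\CD}\subset-\sbm{\AB\\-\psi\CD}^*$, so that $\sbm{\AB\\-\psi\CD}$ is neutral, this estimate is an equality on $\dom{\SmallSysNode^\times}$ via \eqref{eq:diag}, whence equality in \eqref{eq:ScattPow}. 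The main difficulty lies in the construction of the two preceding paragraphs — producing the evolution family and the input/output families in a genuinely time-varying setting and verifying every clause of Definition~\ref{def:WP}; the substitution $v=P(t)x$ is what makes it tractable, freezing the domain at $\dom{A^\times}$ and exposing the stable-family structure through the $t$-dependent equivalent inner products, while the remaining bookkeeping parallels the time-invariant case but must keep the combination $\Ascr^\times\sbm{v\\u}\in\Xscr$ intact, since $B^\times u$ lies only in the extrapolation space $\Xscr_{-1}$ on which $P(t)$ does not act.
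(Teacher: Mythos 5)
The paper does not actually prove Theorem~\ref{thm:Sigmar}: it is imported wholesale from \cite[Thm IV.3]{Kur20}, so your argument has to stand on its own, and cannot be compared line-by-line with anything in the text. Judged on that basis, much of the outline is sound. The substitution $v=P(\cdot)x$ does freeze the domain at $\dom{A^\times}$; $P(t)A^\times$ is maximal dissipative in the equivalent inner product $\Ipdp{P(t)^{-1}\cdot}{\cdot}$ (your surjectivity argument for $\lambda I-P(t)A^\times$ works), so $\set{L(t)}$ is a stable family to which Kato's theorem applies and the \emph{homogeneous} evolution family $U(t,r)$, hence $\T(t,r)=P(t)^{-1}U(t,r)P(r)$, exists; and the derivation of \eqref{eq:ScattPow} from the differential form of scattering dissipativity, including the equality case for skew-adjoint Dirac nodes, is correct.

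The genuine gap is exactly the point you park in your final sentence. For a general scattering passive node, $B^\times$ and $\overline C^\times$ are unbounded, so the ``variation-of-constants and duality formulas familiar from the time-invariant theory'' by which you propose to define $\Phi$, $\Psi$ and $\F$ do not make sense as written: $B^\times u(s)$ lies only in $\Xscr_{-1}$, $U(t,s)$ has been constructed on $\Xscr$, and there is no reason it extends to the extrapolation space of $A^\times$ in such a way that $\int_r^t U(t,s)B^\times u(s)\ud s\in\Xscr$ --- in the time-invariant case this is precisely $L^2$-admissibility of $B^\times$, which is a theorem about the semigroup, not a formula. The same obstruction blocks your existence argument for classical solutions with data in $\Vscr^\times$: one cannot split $\Ascr^\times\sbm{v\\u}$ into $A^\times_{-1}v+B^\times u$ and apply Duhamel, and the time-invariant workaround (differentiate the equation and integrate back) generates extra $\dot P$, $\dot G$ terms you do not control. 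Closing this requires either proving admissibility of $B^\times$ and $\overline C^\times$ for the time-varying evolution family, or --- as in \cite{Kur20}, following \cite{SchWe10} --- realizing the whole time-varying system inside a perturbed Lax--Phillips semigroup on $L^2(\R_-;\Yscr)\times\Xscr\times L^2(\R_+;\Uscr)$, where $B^\times$ never has to be handled in isolation. A smaller point: uniqueness in part (1) is not literally ``inherited from Prop.~\ref{prop:Power} via \eqref{eq:diag}'', since two solutions with the same $u$ need not share the same effort $e$; but the difference argument of Prop.~\ref{prop:Power} applies verbatim with $A$ replaced by the dissipative main operator $A^\times$, so this is a misstatement rather than an error.
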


Thm \ref{thm:Sigmar} is a part of \cite[Thm IV.3]{Kur20}. Observe that \eqref{eq:pHpower} follows from \eqref{eq:ScattPow} combined with \eqref{eq:diag}. 

\begin{remark}
Thm \ref{thm:Sigmar} and Prop.\ \ref{prop:Power} remain true if \eqref{eq:TVpH} is replaced by a port-Hamiltonian system in ``Berlin form'',
$$
	\bbm{P(t)\dot x(t)\\f(t)} = \left(\SysNode
		+\bbm{P(t)G(t)&0\\0&0}\right)
	\bbm{x(t)\\e(t)},
$$
and the same modification is made for \eqref{eq:TVpHdiag}. Then \eqref{eq:standing2} can be relaxed to $P(\cdot)z\in C^1(\rplus;\Xscr)$; see \cite[Thm IV.1]{Kur20}.
\end{remark}

We have the following explicit formulas for the operator families $\Psi$, $\Phi$ and $\F$, whose existence is given in Thm \ref{thm:Sigmar}:

\begin{theorem}\label{thm:exprep}
Under the assumptions of Thm \ref{thm:Sigmar}, for all $x_r\in P(r)^{-1}\,\dom {A^\times}$, 
$$
	\Psi(t,r)x_r=s\mapsto \overline C^\times P(s)\,\T(s,r)x_r,\quad r\leq s\leq t.
$$

If additionally $B^\times:\Escr\to\Xscr$ is bounded and $G(\cdot)^*z\in C^1(\rplus;\Xscr)$ for all $z\in\Xscr$, then:
\begin{enumerate}
\item For all $u\in L_{loc}^2(\rplus;U)$,
$$
	\Phi(t,r)u = \int_r^t \T(t,s)B^\times u(s)\ud s,\quad (t,r)\in\Delta.
$$

\item For all $u\in H^1_{loc}(\rplus;\Escr)$,
$$
\begin{aligned}
	\big(\F(t,r)u\big)(s) &= 
	\overline C^\times P(s)\!\int_r^s \T(s,\sigma)B^\times u(\sigma)\ud \sigma \\
	&\qquad+D^\times u(s),\quad r\leq s\leq t.
\end{aligned}
$$ 
\end{enumerate}
\end{theorem}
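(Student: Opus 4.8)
The plan is to extract each formula from Theorem~\ref{thm:Sigmar} and the compatible representation \eqref{eq:SdiagCompat} (Theorem~\ref{thm:Sdiag}), by writing down the state and output of a classical solution of \eqref{eq:TVpHdiag} explicitly and using that, on $\Vscr^\times$, classical solutions are exactly the smooth mild solutions of $\sbm{\T&\Phi\\\Psi&\F}$. The first observation is that translating the time variable by a fixed $r\ge0$ sends $P(\cdot),G(\cdot)$ to $P(\cdot+r),G(\cdot+r)$, which again satisfy \eqref{eq:standing2} (and still have $G(\cdot+r)^*z\in C^1$), so the ``initial time $0$'' version quoted here as Theorem~\ref{thm:Sigmar} is really the case $r=0$ of \cite[Thm~IV.3]{Kur20}: for every $r\ge0$, every $u\in H^1_{loc}(\rplus;\Escr)$ and every $x_r$ with $\sbm{P(r)x_r\\u(r)}\in\dom{\SmallSysNode^\times}$, there is a unique classical solution $(u,x,y)$ of \eqref{eq:TVpHdiag} on $[r,\infty)$ with $x(r)=x_r$; it is the unique mild solution with that data, so $x(t)=\T(t,r)x_r+\Phi(t,r)u$ and $\proj_{[r,t]}y=\Psi(t,r)x_r+\F(t,r)u$; and, substituting \eqref{eq:SdiagCompat} into \eqref{eq:TVpHdiag}, its state and output satisfy $\dot x(s)=A^\times P(s)x(s)+G(s)x(s)+B^\times u(s)$ and $y(s)=\overline C^\times P(s)x(s)+D^\times u(s)$, where $P(s)x(s)\in\dom{A^\times}\subseteq\dom{\overline C^\times}$ (by classicality, $\sbm{P(s)x(s)\\u(s)}\in\dom{\SmallSysNode^\times}$, together with $B^\times$ mapping into $\Xscr$ when that hypothesis is in force, or with $u=0$ otherwise). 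For the $\Psi$ formula, take $x_r\in P(r)^{-1}\dom{A^\times}$ and apply this with $u=0$: then $x(s)=\T(s,r)x_r$ — so $\T(\cdot,r)x_r$ is automatically $C^1$ with $P(s)\T(s,r)x_r\in\dom{A^\times}$ — and $y(s)=\overline C^\times P(s)\T(s,r)x_r$, while $\proj_{[r,t]}y=\Psi(t,r)x_r$, which is the asserted identity on $[r,t]$.

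Now assume $B^\times\in\Lscr(\Escr,\Xscr)$ and $G(\cdot)^*z\in C^1$. Then $\dom{\SmallSysNode^\times}=\dom{A^\times}\times\Escr$ (standard system node theory: $\dom{\SmallSysNode^\times}=\{\sbm{\xi\\\mu}\mid A_{-1}^\times\xi+B^\times\mu\in\Xscr\}$, and $B^\times$ maps into $\Xscr$), so $(x_r,u)=(0,u)$ is admissible for every $u\in H^1_{loc}$, and the first paragraph gives $\Phi(t,r)u=x(t)$ for the classical solution of $\dot x(s)=A^\times P(s)x(s)+G(s)x(s)+B^\times u(s)$, $x(r)=0$, with $P(s)x(s)\in\dom{A^\times}$. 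The extra smoothness $G(\cdot)^*z\in C^1$ is exactly what makes the dual time-varying system well-posed, hence the evolution family $s\mapsto\T(t,s)$ backward-differentiable on its domain, $\PP{s}\T(t,s)\xi=-\T(t,s)\bigl(A^\times P(s)+G(s)\bigr)\xi$ for $\xi\in P(s)^{-1}\dom{A^\times}$ (\cite{Kur20}). Consequently $g(s):=\T(t,s)x(s)$ is $C^1$ on $[r,t]$ with $g'(s)=\T(t,s)\bigl(\dot x(s)-(A^\times P(s)+G(s))x(s)\bigr)=\T(t,s)B^\times u(s)$, and integrating with $g(r)=\T(t,r)\cdot0=0$, $g(t)=x(t)$ yields $\Phi(t,r)u=\int_r^t\T(t,s)B^\times u(s)\ud s$ for $u\in H^1_{loc}$. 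To reach all $u\in L^2_{loc}(\rplus;\Uscr)$ I would note that both sides are bounded operators $L^2([r,t];\Uscr)\to\Xscr$ — the left by Definition~\ref{def:WP}, the right since $B^\times$ is bounded and $\T$ is locally bounded — agreeing on the dense subspace $H^1_{loc}$, hence everywhere. The $\F$ formula then follows by combining the two: apply the first paragraph to $(x_r,u)=(0,u)$ with $u\in H^1_{loc}$ (so $B^\times u(r)\in\Xscr$); the state is $x(s)=\Phi(s,r)u=\int_r^s\T(s,\sigma)B^\times u(\sigma)\ud\sigma$ by the $\Phi$ formula just proved, and the output $y(s)=\overline C^\times P(s)x(s)+D^\times u(s)$ is precisely the claimed expression, with $\proj_{[r,t]}y=\F(t,r)u$.

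I expect the genuine obstacle to be the backward differentiability of the evolution family used for $\Phi$, i.e. establishing $\PP{s}\T(t,s)\xi=-\T(t,s)(A^\times P(s)+G(s))\xi$ on $P(s)^{-1}\dom{A^\times}$ with a derivative continuous in $s$; this is exactly where the otherwise unused assumption $G(\cdot)^*z\in C^1$ enters, through well-posedness of the dual time-varying system in \cite{Kur20}, and care is needed that in $g(s)=\T(t,s)x(s)$ one differentiates $\T(t,\cdot)$ against the \emph{fixed} vector $x(s)$ while the $s$-dependence of $x(\cdot)$ contributes the separate term $\T(t,s)\dot x(s)$. A second, more routine, point of care is the closure/density argument extending the $\Phi$ (and hence the $\F$) formula from $H^1_{loc}$ to $L^2_{loc}$; everything else is bookkeeping with \eqref{eq:SdiagCompat}, \eqref{eq:TVpHdiag} and the time shift.
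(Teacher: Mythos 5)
The paper states Theorem \ref{thm:exprep} without any proof (only the remark about the unbounded-$B^\times$ case follows it), so there is no proof of record to compare yours against; I can only assess your argument on its own terms. It is the natural argument and is correct in outline: translate time by $r$ (the shifted $P,G$ again satisfy \eqref{eq:standing2}), use Thm \ref{thm:Sigmar} to identify classical solutions with smooth mild solutions, and then read off $\Psi$ from \eqref{eq:mildsol} and the compatible representation \eqref{eq:SdiagCompat} applied to the solution with $u=0$, $\Phi$ from the solution with $x_r=0$ via variation of parameters, and $\F$ by combining the two, finally extending the $\Phi$ formula from $H^1_{loc}$ to $L^2_{loc}$ by density and local uniform boundedness. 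Two points carry essentially all the weight and are cited rather than proved. First, identifying the well-posed families of the time-shifted problem with $\T(\cdot+r,\cdot+r)$ etc.\ deserves a sentence: it holds because the families are bounded and are pinned down by the unique classical solutions on the dense sets $P(r)^{-1}\dom{A^\times}$ and $H^1_{loc}(\rplus;\Escr)$. Second, and more seriously, the variation-of-parameters step hinges on the backward differentiability $\PP{s}\T(t,s)\xi=-\T(t,s)\bigl(A^\times P(s)+G(s)\bigr)\xi$ for $\xi\in P(s)^{-1}\dom{A^\times}$; you correctly single this out as the crux and as the reason for the hypothesis $G(\cdot)^*z\in C^1(\rplus;\Xscr)$ (through the dual time-varying system), but you do not establish it --- it is outsourced to \cite{Kur20}. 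If that reference indeed supplies left differentiability of the evolution family on the invariant core, as Kato-type constructions do, your proof is complete; if not, that single step is the gap to fill, since no alternative route to the $\Phi$ formula (e.g.\ differentiating $t\mapsto\int_r^t\T(t,s)B^\times u(s)\ud s$ forward in $t$ and invoking uniqueness of classical solutions) avoids some comparable regularity of $\T$ in its second argument.
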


The representation formulas in items 1) and 2) of Thm \ref{thm:exprep} look almost the same in the case where $B^\times$ is unbounded, but they require a discussion on the extrapolation space, and here we would not benefit from getting into the details of that.

\section{Existence of classical solutions for impedance representations}\label{sec:pHtraj}

We can now examine the existence of classical solutions of a port-Hamiltonian system in impedance form \eqref{eq:TVpH}, where the external effort $e$ is considered the input and the external flow $f$ is taken as the output. The question is for which combinations of initial state $x_0$ and effort signal $e$ we can expect a classical solution of \eqref{eq:TVpH}. The impedance representation is in general not well-posed even in the time-invariant setting \cite[Thm 5.1]{StafPassI}, and then, the concept ``mild solution of \eqref{eq:TVpH}`` is not defined either. The disscussion here is mostly relevant for port-Hamiltonian systems which are not of boundary control type, because Jacob and Laasri have already obtained better results for the boundary control case in \cite{JaLa21}.

First define (with the limit taken in $\Xscr\times L^2_{loc}(\rplus;\Escr)$)
\begin{equation}\label{eq:Vdef}
	\Vscr :=\lim_{t\to\sup \rplus}
	\bbm{\sqrt{2\re\beta} I&0 \\
	\Psi^\times(t,0)&I+\F^\times(t,0)\proj_{[t,0]}}	
	\Vscr^\times.
\end{equation}
Then we have the following corollary of Thm \ref{thm:Sigmar}:

\begin{corollary}\label{cor:classTrajExist}
Under the assumptions of Thm \ref{thm:Sigmar}, for every $\sbm{x_0\\e}\in \Vscr $, there is a unique classical solution $(e,x,f)$ of \eqref{eq:TVpH} on $\rplus$ with $x(0)=x_0$ and the given effort signal $e$. (The flow satisfies $f\in H_{loc}^1(\rplus;\Escr)$.)
\end{corollary}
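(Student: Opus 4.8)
The plan is to transport the classical-solution statement of Theorem~\ref{thm:Sigmar}(1) for the external scattering representation \eqref{eq:TVpHdiag} back to the impedance representation \eqref{eq:TVpH} by inverting the external Cayley transform \eqref{eq:diag}. The point is that \eqref{eq:diag} is a fixed (time-independent) invertible linear map between the pairs $\sbm{e\\f}$ and $\sbm{u\\y}$, and, as remarked after \eqref{eq:TVpHdiag}, it commutes with the multiplicative and additive time-varying perturbations $P(t)$, $G(t)$; so a triple $(e,x,f)$ solves \eqref{eq:TVpH} classically on $\rplus$ if and only if the transformed triple $(u,x,y)$, with $\sbm{u\\y}$ obtained from $\sbm{e\\f}$ pointwise in $t$ via \eqref{eq:diag}, solves \eqref{eq:TVpHdiag} classically. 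The Sobolev regularity matches on both sides because \eqref{eq:diag} is a bounded invertible operator that acts pointwise in time, hence preserves $H^1_{loc}$, $C^1$ and $L^2_{loc}$.

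First I would make precise the correspondence of initial data. Given $\sbm{x_0\\e}\in\Vscr$, unwind the definition \eqref{eq:Vdef}: by construction there is some $\sbm{x_0'\\u}\in\Vscr^\times$ with $x_0=\sqrt{2\re\beta}\,x_0'$ (up to the normalization in the first row) and $e$ is recovered from $u$ together with the output $y$ of the scattering system via the second row of \eqref{eq:diag}, the output being produced by $\Psi^\times(t,0)x_0'+\F^\times(t,0)u$ in the limit $t\to\sup\rplus$; this is exactly why \eqref{eq:Vdef} is phrased as a limit of the images of $\Vscr^\times$ under that lower-triangular operator matrix. Concretely, $\sbm{x_0\\e}\in\Vscr$ means precisely that there exists $\sbm{x_0'\\u}\in\Vscr^\times$ whose (unique, by Theorem~\ref{thm:Sigmar}) classical solution $(u,x,y)$ of \eqref{eq:TVpHdiag} transforms, via \eqref{eq:diag} applied to $\sbm{e(t)\\f(t)}$ against $\sbm{u(t)\\y(t)}$, into a triple $(e,x,f)$; the constraint $\sbm{P(0)x_0'\\u(0)}\in\dom{\SmallSysNode^\times}$ from \eqref{eq:Vtimes} translates, under the definition of $\dom{\SmallSysNode^\times}$ as the image of $\dom\SmallSysNode$ under the Cayley transform, into $\sbm{P(0)x_0\\e(0)}\in\dom\SmallSysNode$.

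Then I would argue existence and uniqueness as follows. For existence: take the classical solution $(u,x,y)$ of \eqref{eq:TVpHdiag} guaranteed by Theorem~\ref{thm:Sigmar}(1), which has $y\in H^1_{loc}(\rplus;\Yscr)$ and $x\in C^1(\rplus;\Xscr)$; define $(e,f)$ pointwise in $t$ from $(u,y)$ via the second line of \eqref{eq:diag}. Then $e\in H^1_{loc}(\rplus;\Escr)$ and $f\in H^1_{loc}(\rplus;\Fscr)$, $x$ is unchanged hence still $C^1$, $\sbm{P(t)x(t)\\e(t)}\in\dom\SmallSysNode$ because applying the Cayley transform to $\sbm{P(t)x(t)\\u(t)}\in\dom{\SmallSysNode^\times}$ lands in $\dom\SmallSysNode$, and \eqref{eq:TVpH} holds because \eqref{eq:TVpHdiag} does and the transform \eqref{eq:diag} intertwines the two (with $P(t),G(t)$ untouched). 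For uniqueness: given any classical solution $(e,x,f)$ of \eqref{eq:TVpH} with $x(0)=x_0$ and the prescribed $e$, Proposition~\ref{prop:Power} already gives that it is uniquely determined by $x(0)$ and $e$, so there is nothing more to do; alternatively one transports it forward through \eqref{eq:diag} to a classical solution of \eqref{eq:TVpHdiag} and invokes the uniqueness in Theorem~\ref{thm:Sigmar}(1).

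The main obstacle is bookkeeping around the definition \eqref{eq:Vdef}: one has to check that the limit defining $\Vscr$ is well-defined (the operator matrix there is causal, so $\Psi^\times(t,0)$ and $\F^\times(t,0)\proj_{[t,0]}$ stabilize on any bounded time interval as $t$ grows, which makes the $L^2_{loc}$-limit harmless) and that membership in $\Vscr$ is genuinely equivalent to the existence of the scattering-side data $\sbm{x_0'\\u}\in\Vscr^\times$ described above, including the boundary condition at $t=0$. Everything else — regularity preservation under the bounded pointwise map \eqref{eq:diag}, the intertwining of \eqref{eq:TVpH} and \eqref{eq:TVpHdiag}, and the $f\in H^1_{loc}$ conclusion — is routine once that dictionary is set up. I would present the proof as: (i) unwind \eqref{eq:Vdef}; (ii) apply Theorem~\ref{thm:Sigmar}(1); (iii) transform back via \eqref{eq:diag} and read off regularity and the domain condition; (iv) cite Proposition~\ref{prop:Power} for uniqueness.
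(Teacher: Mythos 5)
Your proposal is correct and follows essentially the same route as the paper: unwind the definition \eqref{eq:Vdef} to identify $\sbm{x_0\\e}\in\Vscr$ with a smooth mild (hence, by Thm~\ref{thm:Sigmar}, classical) solution $(u,x,y)$ of \eqref{eq:TVpHdiag} satisfying $e=(u+y)/\sqrt{2\re\beta}$, define $f$ by the inverse of the external Cayley transform \eqref{eq:diag}, and invoke Prop.~\ref{prop:Power} for uniqueness. The extra bookkeeping you flag around the limit in \eqref{eq:Vdef} and the normalization of the first row is reasonable diligence but not needed beyond what the paper's two-line argument already uses.
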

\begin{proof}
By \eqref{eq:mildsol} and the definitions of $\Vscr^\times$ and $\Vscr $, the condition $\sbm{x_0\\e}\in \Vscr $ means that $e$ is the effort associated to some smooth mild solution $(u,x,y)$ of $\sbm{\T&\Phi\\\Psi&\F}$ with $x(0)=x_0$, via $e=(u+y)/\sqrt{2\re\beta}$. By Thm \ref{thm:Sigmar}, $(u,x,y)$ is also a classical solution of \eqref{eq:TVpHdiag}. Setting 
$$
	f:=\frac{\overline\beta\psi^*u-\beta\psi^*y}{\sqrt{2\re\beta}}\in H^1_{loc}(\rplus;\Escr),
$$ 
we then get that $(e,x,f)$ is a classical solution of \eqref{eq:TVpH}. Uniqueness was established already in Prop.\ \ref{prop:Power}.
\end{proof}

The difficulty is of course to characterize $\Vscr $. Indeed, computing $\F$ using Thm \ref{thm:exprep} requires the evolution family $\T$, and in general, one has no analytic formula for that. 

We next characterize $\Vscr $ for the important special case of \emph{distributed control and observation}, leaving the general case as an open problem.

\begin{theorem}\label{thm:bibo}
Assume that $\SmallSysNode$ is a Dirac node with bounded control, observation and feedthrough, i.e., $\Fscr=\Escr$, $\psi=I$, $\dom{\SmallSysNode}=\sbm{\dom A\\\Escr}$ and $\SmallSysNode=\sbm{A&B\\C&D}$, where $B\in\Lscr(\Escr,\Xscr)$, $C\in\Lscr(\Xscr,\Fscr)$ and $D\in\Lscr(\Escr)$. Then $\sbm{A&B\\C&D}$ is a system node.

If additionally \eqref{eq:standing2} holds, then
\begin{equation}\label{eq:VtSimple}
	\Vscr =\Vscr^\times=
	\bbm{P(0)^{-1}\dom A\\H^1_{loc}(\rplus;\Escr)},
\end{equation}
i.e., for all $x_0\in P(0)^{-1}\dom A$ and $e\in H^1_{loc}(\rplus;\Escr)$, there exist unique $x$ and $f$, such that $(e,x,f)$ is a classical solution of \eqref{eq:TVpH} with $x(0)=x_0$. 
\end{theorem}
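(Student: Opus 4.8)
The plan is to prove the two assertions separately. For the first, I would verify that $\sbm{A&B\\C&D}$ with bounded $B,C,D$ is a system node directly from the definition. Since $\sbm{\AB\\-\psi\CD} = \sbm{A&B\\-C&-D}$ is maximal dissipative by hypothesis, it is closed; bounded perturbation by $\sbm{0&B\\0&D}$ then shows $\sbm{A&B\\C&D}$ is closed, and also that $\AB=\sbm{A&B}$ is closed on $\dom A\times\Escr$, giving condition (1). The main operator $A$ equals the first component, and since $\sbm{A&B\\-C&-D}$ maximal dissipative and $B,C,D$ bounded force $A+A^*\le 0$ with $A$ closed and densely defined (as $\dom A\times\Escr$ is dense), the Lumer--Phillips theorem gives that $A$ generates a contraction $C_0$-semigroup; that is condition (2). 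Condition (3) is immediate because $\dom\SmallSysNode = \dom A\times\Escr$, so every $e$ pairs with any $x\in\dom A$.

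For the identity \eqref{eq:VtSimple}, the strategy is to feed this through the external Cayley transform and Theorem~\ref{thm:Sigmar}, then come back. First I would show $\Vscr^\times = \sbm{P(0)^{-1}\dom A}{H^1_{loc}(\rplus;\Escr)}$. Because $B,C,D$ are bounded and $\psi=I$, the external Cayley transform is an \emph{invertible bounded} change of variables on $\sbm{\Xscr}{\Escr}$ (the matrices in \eqref{eq:diag} are bounded with bounded inverse), so $\dom{\SmallSysNode^\times}$ equals the image of $\dom A\times\Escr$ under a bounded bijection whose first block row is $\sqrt{2\re\beta}\sbm{I&0}$ up to the action on $\Escr$; concretely $\dom{\SmallSysNode^\times}=\sbm{\dom A}{\Escr}$ as well, since the transform does not change the $\Xscr$-component and acts boundedly and invertibly on the $\Escr$-component. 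Then the defining condition $\sbm{P(0)x_0}{u(0)}\in\dom{\SmallSysNode^\times}$ in \eqref{eq:Vtimes} reduces to $P(0)x_0\in\dom A$, i.e. $x_0\in P(0)^{-1}\dom A$, with no constraint on $u(0)$ beyond $u\in H^1_{loc}$. This gives $\Vscr^\times$ as claimed.

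Next I would show $\Vscr=\Vscr^\times$. By definition \eqref{eq:Vdef}, $\Vscr$ is the limit of images of $\Vscr^\times$ under $\sbm{\sqrt{2\re\beta}I&0}{\Psi^\times(t,0)\ \ I+\F^\times(t,0)\proj_{[t,0]}}$. Since $B^\times$ is bounded here (it is $\sqrt{2\re\beta}B$ up to the bounded transform) and $G(\cdot)^*z\in C^1$, Theorem~\ref{thm:exprep} gives the integral representations of $\Phi,\Psi,\F$; in particular $y\in H^1_{loc}$ whenever $u\in H^1_{loc}$ and $x_0\in P(0)^{-1}\dom A$, using $\overline C^\times = C$ bounded. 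The map $u\mapsto e=(u+y)/\sqrt{2\re\beta}$ (read off from \eqref{eq:diag} and the proof of Corollary~\ref{cor:classTrajExist}) is therefore a \emph{bounded, boundedly invertible} triangular map $H^1_{loc}\to H^1_{loc}$ for each fixed $x_0$, since the off-diagonal term $\Psi^\times x_0 + \F^\times u$ is a Volterra-type integral operator with bounded kernel-operators and hence has bounded inverse (Neumann series / Gronwall). Consequently the image of $\Vscr^\times$ under the matrix in \eqref{eq:Vdef} is again $\sbm{P(0)^{-1}\dom A}{H^1_{loc}(\rplus;\Escr)}$ for every $t$, the family is consistent in $t$, and the limit is the same set. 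Combining with Corollary~\ref{cor:classTrajExist} then yields existence and uniqueness of the classical solution $(e,x,f)$ with $f\in H^1_{loc}(\rplus;\Escr)$ for every such $(x_0,e)$.

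The main obstacle I anticipate is the bookkeeping around $\Vscr$: one must check that the ``effort'' parametrization $u\mapsto e$ really is a bijection of $H^1_{loc}(\rplus;\Escr)$ onto itself \emph{compatibly with the $t\to\sup\rplus$ limit} in \eqref{eq:Vdef}, i.e. that invertibility of $I+\F^\times(t,0)\proj_{[t,0]}$ holds uniformly enough on compact time intervals and is consistent as $t$ grows. This is where boundedness of $B^\times$ and $D^\times$ and the causal (Volterra) structure of $\F^\times$ are essential: on each $[0,t]$ the operator $\F^\times(t,0)\proj_{[t,0]}$ is a finite-horizon causal integral operator, so $I+\F^\times(t,0)\proj_{[t,0]}$ is invertible by a standard contraction/Gronwall argument, and the inverses are consistent in $t$ by causality. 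Once that is in place, everything else is the routine transfer of \eqref{eq:VtSimple} for $\Vscr^\times$ through this bijection, plus an application of Corollary~\ref{cor:classTrajExist}.
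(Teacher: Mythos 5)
Your first part (that $\sbm{A&B\\C&D}$ is a system node) and your computation of $\Vscr^\times$ follow essentially the paper's route: closedness and Lumer--Phillips for $A$, and the observation that the external Cayley transform with bounded $B,C,D$ and invertible $\beta I+D$ leaves the domain $\sbm{\dom A\\\Escr}$ unchanged, so that the constraint in \eqref{eq:Vtimes} reduces to $x_0\in P(0)^{-1}\dom A$. (Two small points there: you should justify \emph{maximality}, not just dissipativity, of $A$ --- the paper does this by noting that $A^*$ inherits dissipativity from $\sbm{A&B\\-C&-D}^*=\sbm{A^*&-C^*\\B^*&-D^*}$; and $\overline C^\times$ is $-\sqrt{2\re\beta}(\beta I+D)^{-1}C$, not $C$, though it is still bounded.) The inclusion $\Vscr\subset\Vscr^\times$ is also the same as in the paper.

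The divergence, and the gap, is in $\Vscr^\times\subset\Vscr$. You propose to invert $I+\F^\times(t,0)\proj_{[t,0]}$ on $H^1_{loc}(\rplus;\Escr)$ by a Neumann series / Gronwall argument based on the Volterra structure. That argument gives invertibility on $L^2(0,t;\Escr)$ (the static part $I+D^\times=2\re\beta\,(\beta I+D)^{-1}$ is invertible and the strictly causal part is quasinilpotent), but it does \emph{not} deliver the needed $H^1$-regularity of $u=(I+\F^\times)^{-1}\bigl(\sqrt{2\re\beta}\,e-\Psi^\times x_0\bigr)$: the strictly causal part $s\mapsto \overline C^\times P(s)\int_0^s\T(s,\sigma)B^\times u(\sigma)\ud\sigma$ is $\overline C^\times P(\cdot)$ applied to a merely \emph{mild} state trajectory, whose derivative involves $A^\times P(s)x(s)$ and hence need not exist in $\Xscr$ for $u\in L^2$ only; so the bootstrap does not close, and convergence of the Neumann series in $H^1(0,t;\Escr)$ would require operator-norm estimates on $\F^\times$ as a map of $H^1$ that you have not supplied. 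The paper sidesteps this entirely with a perturbation trick that is the raison d'\^etre of $G$: given $\sbm{x_0\\e}\in\Vscr^\times$, set $v:=(\beta I+D)e/\sqrt{2\re\beta}$ and replace $G(t)$ by $\widetilde G(t):=G(t)+B(\beta I+D)^{-1}CP(t)$, which still satisfies \eqref{eq:standing2}; Theorem~\ref{thm:Sigmar} applied to the modified scattering system with input $v$ yields a classical trajectory $x$, and the identities $A^\times P+\widetilde G=AP+G$ and $B^\times v=Be$ show that this $x$ solves the impedance equation \eqref{eq:TVpH} classically, whence $f=CP(\cdot)x+De\in H^1_{loc}$ and $u=(\beta e+f)/\sqrt{2\re\beta}\in H^1_{loc}$, giving $\sbm{x_0\\e}\in\Vscr$ directly. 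You should either adopt this device or supply the missing $H^1$ bounds for your Neumann series.
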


\begin{proof}
The dissipative operator $A$ inherits closedness and denseness of its domain from $\sbm{A&B\\-C&-D}$ which necessarily has these properties due to maximal dissipativity. Moreover, $A^*$ inherits dissipativity from $\sbm{A&B\\-C&-D}^*=\sbm{A^*&-C^*\\B^*&-D^*}$, so that $A$ generates a contraction semigroup by the Lumer-Phillips theorem. The other system node  properties are easily checked.

Due to the boundedness of $D$, it is easy to find a $\beta\in\cplus$ such that $(\beta I+D)^{-1}:\Fscr\to\Escr$ is bounded. A calculation gives the external Cayley transform $\SmallSysNode^\times$ of $\SmallSysNode$ as
$$
	\bbm{A-B(\beta I+D)^{-1}C
			& \sqrt{2\re\beta}B(\beta I+D)^{-1} \\
		-\sqrt{2\re\beta}(\beta I+D)^{-1}C
			& (\overline\beta I-D)(\beta I+D)^{-1}},
$$
with the same domain as $\sbm{A&B\\C&D}$, so that
$$
	A^\times:=A-B(\beta I+D)^{-1}C,
	\quad \dom{A^\times}=\dom A,
$$
is a bounded perturbation of $A$. From $\dom{\sbm{A&B\\C&D}}$,
$$
\begin{aligned}
	\bbm{P(0)x_0\\ u(0)}\in\dom{\SmallSysNode^\times}&
	\quad\iff \\
	x_0\in P(0)^{-1}\dom{A^\times}&,
\end{aligned}
$$
and this establishes that $\Vscr^\times$ in \eqref{eq:Vtimes} satisfies \eqref{eq:VtSimple}. 

Next we prove that $\Vscr =\Vscr^\times$. First pick $\sbm{x_0\\e}\in \Vscr $ arbitrarily; as in the proof of Cor.\ \ref{cor:classTrajExist}, there exists some classical solution $(u,x,y)$ of \eqref{eq:TVpHdiag}, with $x(0)=x_0$ and $u,y\in H^1_{loc}(\rplus;\Escr)$, and moreover $e=(u+y)/\sqrt{\re\beta}\in H^1_{loc}(\rplus;\Escr)$. Hence $\sbm{x_0\\e}\in\Vscr^\times$, and so $\Vscr \subset\Vscr^\times$.

Now let $\sbm{x_0\\e}\in \Vscr^\times$ be arbitrary and define $v:=(\beta I+D)e/\sqrt{2\re\beta}$, so that $\sbm{x_0\\v}\in\Vscr^\times$. With 
$$
	\widetilde G(t):=G(t) + B(\beta I+D)^{-1}CP(t),
	\quad t\geq0,
$$
we get $\widetilde G(\cdot)z\in C^1(\rplus;\Xscr)$, and so
$$
	\bbm{\dot x(t)\\w(t)}=\left(\SysNode^\times
		+\bbm{\widetilde G(t)P(t)^{-1}&0\\0&0}\right)
	\bbm{P(t)x(t)\\v(t)}
$$
has a unique solution $(v,x,w)$ with $x(0)=x_0$ and $v,w\in H^1_{loc}(\rplus;\Escr)$. However, this $x$ also solves
$$
\begin{aligned}
	\dot x(t)&=\big(A^\times P(t)+G(t)
		+ B(\beta I+D)^{-1}CP(t)\big)x(t) \\
	&\qquad +B^\times v(t) \\
	&= \big(AP(t)+G(t)\big) x(t)+Be(t),\quad t\geq0,
\end{aligned}
$$
and we get a classical solution $(e,x,f)$ of \eqref{eq:TVpH}, because 
$$
	f(\cdot):=CP(\cdot)x(t)+De(\cdot) \in H^1_{loc}(\rplus;\Escr).
$$
Setting $u:=(\beta e+f)/\sqrt{2\re\beta}\in H^1_{loc}(\rplus;\Escr)$, we then get $\sbm{x_0\\u}\in\Vscr^\times$, so that $\sbm{x_0\\e}\in\Vscr $. Hence, $\Vscr^\times\subset \Vscr $, and we also constructed a classical solution $(e,x,f)$ of \eqref{eq:TVpH}.
\end{proof}

If the Dirac node $\SmallSysNode$ is at the same time a scattering passive system node. Then we can apply Theorems \ref{thm:Sigmar} and \ref{thm:exprep} directly to the impedance representation \eqref{eq:TVpH} rather than to the scattering representation \eqref{eq:TVpHdiag}. We end the paper with an example which shows that such Dirac nodes actually exist.

\section{The example}\label{sec:Ex}

Denote the state space of \eqref{eq:delayprel} by $\Zscr$ and assume that this is a Hilbert space. The delay in \eqref{eq:delayprel} can be implemented by adding a delay line to the system state, leading to the system
\begin{equation}\label{eq:Delayline}
	\left\{\begin{aligned} 
	\dot w(t,\xi)&=\frac{\partial}{\partial\xi} w(t,\xi),\quad -\tau<\xi<0, ~t\geq0,\\
	 w(t,0)&=H(t)z(t),\\
	\dot z(t)&=(J-R)H(t)z(t)-A_1w(t,-\tau)+Ee(t),\\
	 f(t)&=E^*H(t)z(t);  \end{aligned}\right.
\end{equation}
observe that the delay line is filled with past $H(\cdot)z(\cdot)$, not with past $z(\cdot)$. The state space of \eqref{eq:Delayline} is $\Xscr:=\sbm{L^2(-\tau,0;\Zscr)\\\Zscr}$ which we equip with the Hilbert-space norm given by
$$
	\left\| \bbm{w\\z} \right \|_\Xscr^2:=\int_{-\tau}^0 \Ipdp{H_0w(\xi)}{w(\xi)}_\Zscr\ud\xi+\|z\|^2_\Zscr,
$$
for a bounded $H_0>0$. The space $\Fscr$ equals $\Escr$, $\psi=I_\Escr$. Set 
$$
	P(t):=\bbm{I&0\\0&H(t)}, \quad t\geq0,
$$
to get the associated Dirac node candidate
$$
	\SysNode:=\sbm{\displaystyle A&\bbm{0\\E} \\ \bbm{0&E^*}&\displaystyle 0}:
	\bbm{\Xscr\\\Escr}\supset\bbm{\dom A\\\Escr}\to\bbm{\Xscr\\\Escr},
$$
where $A:\Xscr\supset\dom A\to\Xscr$ is defined as
$$
\begin{aligned}
	A&:=\bbm{\frac{\partial}{\partial \xi}&0\\-A_1\epsilon_{-\tau}&J-R},\\
	\dom A&:=\set{\bbm{w\\z}\in \bbm{H^1(-\tau,0;\Zscr)\\\Zscr}\biggmid \epsilon_0 w=z},
\end{aligned}
$$
where $\epsilon_\xi$ denotes point evaluation at $\xi\in[-\tau,0]$.

\begin{theorem}\label{thm:DiracNode}
The operator $\SmallSysNode$ constructed above is a Dirac node if and only if
$$
	M:=\bbm{H_0&A_1^*\\A_1&2R-H_0}\geq0.
$$
In this case, for all $\beta\in\cplus$, all statements in Thm \ref{thm:bibo} hold.

The operator $\SmallSysNode$ is a scattering passive system node if and only if
$$
	N:=\bbm{H_0&A_1^*&0\\A_1&2R-H_0-EE^* &-E
		\\0&-E^*&I}\geq0.
$$

Assume $H(\cdot)>0$ and $H(\cdot),\dot H(\cdot),H(\cdot)^{-1}\in C^1(\rplus;\Zscr)$ strongly. For all $w_0\in H^1(-\tau,0;\Zscr)$, $e\in H^1_{loc}(\rplus;\Escr)$, there are unique $z$ and $f$, such that $(e,z,f)$ solves \eqref{eq:delayprel} classically.
\end{theorem}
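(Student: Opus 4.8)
The plan is to verify the hypotheses of Theorem~\ref{thm:bibo} and Theorem~\ref{thm:DiracNode} for the concrete node $\SmallSysNode$ associated to \eqref{eq:Delayline}, and then to transfer the resulting classical solution of the abstract impedance system \eqref{eq:TVpH} back into a solution of the original delay system \eqref{eq:delayprel}. Concretely, the claimed statement is the last paragraph of Theorem~\ref{thm:DiracNode}, and the earlier parts of that theorem (the $M\geq0$ characterisation of the Dirac-node property, and the fact that ``all statements in Thm~\ref{thm:bibo} hold'') are available to us.

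First I would check that the standing smoothness hypotheses \eqref{eq:standing2} are met by $P(t)=\sbm{I&0\\0&H(t)}$ under the assumption $H(\cdot),\dot H(\cdot),H(\cdot)^{-1}\in C^1(\rplus;\Zscr)$ strongly: then $P(\cdot)z\in C^2$ because $H(\cdot)z\in C^2$ (we only assumed $\dot H(\cdot)z\in C^1$, hence $H(\cdot)z\in C^2$), and $P(\cdot)^{-1}z=\sbm{z_1\\H(\cdot)^{-1}z_2}\in C^1$, and here $G\equiv0$, which is trivially $C^1$. Next I would invoke the first part of Theorem~\ref{thm:DiracNode}: since the statement of the final paragraph does not repeat the hypothesis $M\geq0$, I would point out that it is implicitly in force (or, more cautiously, note that by the Lumer--Phillips argument already given in the proof of Theorem~\ref{thm:bibo}, the dissipativity of $A$ and of $A^*$ — equivalently $M\geq0$ — is exactly what is needed, so the final paragraph should be read under $M\geq0$). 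Granting this, $\SmallSysNode=\sbm{A&B\\C&D}$ with $B=\sbm{0\\E}$ bounded, $C=\sbm{0&E^*}$ bounded, $D=0$ bounded, so Theorem~\ref{thm:bibo} applies verbatim: for every $x_0\in P(0)^{-1}\dom A$ and every $e\in H^1_{loc}(\rplus;\Escr)$ there is a unique classical solution $(e,x,f)$ of \eqref{eq:TVpH} with $x(0)=x_0$.

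The remaining work is the dictionary between \eqref{eq:TVpH} and \eqref{eq:delayprel}. Write $x(t)=\sbm{w(t,\cdot)\\z(t)}$; then $P(t)x(t)=\sbm{w(t,\cdot)\\H(t)z(t)}$, and the membership $\sbm{P(t)x(t)\\e(t)}\in\dom\SmallSysNode=\sbm{\dom A\\\Escr}$ forces $w(t,\cdot)\in H^1(-\tau,0;\Zscr)$ with $\epsilon_0 w(t,\cdot)=H(t)z(t)$, i.e.\ the boundary condition $w(t,0)=H(t)z(t)$ of \eqref{eq:Delayline}. Expanding \eqref{eq:TVpH} componentwise with $A$, $B$, $C$, $D$ as above gives exactly the four lines of \eqref{eq:Delayline}: the first component of $\dot x=AP(t)x+Be$ yields $\dot w(t,\xi)=\partial_\xi w(t,\xi)$, the second yields $\dot z(t)=(J-R)H(t)z(t)-A_1 w(t,-\tau)+Ee(t)$, and $f(t)=CP(t)x(t)+De(t)=E^*H(t)z(t)$. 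Eliminating the delay line via the transport equation — $w(t,\xi)=w(t+\xi,0)=H(t+\xi)z(t+\xi)$ for $t+\xi\geq0$, and $w(t,\xi)=w_0(\xi+t)$-type data for $t+\xi<0$ — recovers precisely \eqref{eq:delayprel} with $A_1 w(t,-\tau)=A_1 H(t-\tau)z(t-\tau)$ once $t\geq\tau$, and with the given past history on $[-\tau,0]$ encoded as $w_0$. The initial data match: $x_0\in P(0)^{-1}\dom A$ means $P(0)x_0=\sbm{w_0\\H(0)z(0)}\in\dom A$, i.e.\ $w_0\in H^1(-\tau,0;\Zscr)$ with $w_0(0)=H(0)z(0)$, which is the stated hypothesis $w_0\in H^1(-\tau,0;\Zscr)$ (the compatibility $w_0(0)=H(0)z(0)$ being the natural consistency condition determining $z(0)$ from the history). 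Finally, $x\in C^1(\rplus;\Xscr)$ and $f\in H^1_{loc}(\rplus;\Escr)$ give the asserted regularity of $z$ and $f$, and uniqueness descends from the uniqueness in Theorem~\ref{thm:bibo} since the correspondence $(e,x,f)\leftrightarrow(e,z,f)$ is a bijection.

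The main obstacle is the bookkeeping in the last step: making the identification between the function-space state $x=\sbm{w\\z}$ and the genuinely delayed trajectory $z(\cdot)$ on $[-\tau,0]$ precise, in particular checking that the delay-line component $w(t,\cdot)$ produced by the abstract solution does reproduce the shifted history $H(t+\cdot)z(t+\cdot)$ and not merely some weak version of it — this requires knowing that along a classical solution the first component of $P(t)x(t)$ lies in $H^1(-\tau,0;\Zscr)$ for each $t$ (true, since $\sbm{P(t)x(t)\\e(t)}\in\dom\SmallSysNode$) and that the transport equation is satisfied pointwise, so that the method of characteristics applies. The smoothness $H(\cdot)z(\cdot)\in C^1$ at the boundary $\xi=0$ is what guarantees the delay-line content is itself $C^1$ in time away from the initial layer. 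None of this is deep, but it is the only place where something beyond ``cite Theorem~\ref{thm:bibo}'' is needed.
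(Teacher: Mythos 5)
Your proposal addresses only the final paragraph of the theorem. You explicitly declare the two equivalences --- that $\SmallSysNode$ is a Dirac node if and only if $M\geq0$, and a scattering passive system node if and only if $N\geq0$ --- to be ``available to us,'' but they are part of the statement to be proved, and they are where essentially all of the work in the paper's proof lies. The paper establishes sufficiency of $M\geq0$ by the integration-by-parts identity $2\re\Ipdp{\sbm{\AB\\-\CD}\sbm{w\\z\\e}}{\sbm{w\\z\\e}}=-\Ipdp{M\sbm{w(-\tau)\\z}}{\sbm{w(-\tau)\\z}}$ along the delay line, then proves \emph{maximality} of the dissipative operator by explicitly solving the resolvent equation \eqref{eq:maxdiss} for large real $\lambda$ via \eqref{eq:wsol}--\eqref{eq:w0eq}, and proves necessity by exhibiting, for any vector on which $M$ is negative, a linear interpolant $w\in H^1(-\tau,0;\Zscr)$ with prescribed values $w(-\tau)=\omega$, $w(0)=z$ that violates dissipativity; the $N\geq0$ characterisation then follows by adding $2\re\Ipdp{e}{E^*z}-\|e\|^2+\|E^*z\|^2$ to the same identity. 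None of this appears in your proposal. The omission is not merely cosmetic: without the maximality step, $M\geq0$ yields only dissipativity, not the maximal dissipativity required by Definition~\ref{def:DiracNode}, and hence Theorem~\ref{thm:bibo} --- on which your entire argument for the last paragraph rests --- cannot be invoked.

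For the part you do treat, your route agrees with the paper's: verify \eqref{eq:standing2} with $G\equiv0$ (your derivation of $P(\cdot)z\in C^2$ from the strong $C^1$ assumptions on $H(\cdot)$ and $\dot H(\cdot)$ is correct), impose the compatibility $z_0=H(0)^{-1}w_0(0)$ so that $P(0)\sbm{w_0\\z_0}\in\dom A$, apply Theorem~\ref{thm:bibo}, and obtain uniqueness from Proposition~\ref{prop:Power}. Your translation between the abstract state $x=\sbm{w\\z}$ and the delayed trajectory, via the method of characteristics for the transport equation, is in fact more explicit than anything in the paper, which stops at citing Theorem~\ref{thm:bibo}; that extra care is welcome, but it does not compensate for the missing equivalences, which constitute the bulk of the theorem.
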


We point out that $M\geq0$ is possible only if $0<H_0\leq2R$, i.e., if there is delay in the system, then we can have a Dirac node only if there is also some internal damping.

\begin{proof}
We first prove that if $M\geq0$ then $\SmallSysNode$ is a Dirac node.  It is clear that \eqref{eq:InjBeta} is injective for $\beta=1$, so it only remains to verify that $\sbm{\AB\\-\CD}$ is maximal dissipative. For all $\sbm{w\\z}\in\dom A$ and $e\in\Escr$, we indeed have
\begin{equation}\label{eq:DnodeDiss}
\begin{aligned}
2\re\Ipdp{\bbm{\AB\\-\CD}\bbm{w\\z\\e}}{\bbm{w\\z\\e}}_{\sbm{\Xscr\\\Escr}} &= \\
2\re\int_{-\tau}^0 \Ipdp{H_0w(\xi)}{w'(\xi)}\ud\xi 
	-2\re\Ipdp{E^*z}{e}&\\
	+2\re\Ipdp{(J-R)z-A_1w(-\tau)+Ee}{z} &=\\
	-\Ipdp{M\bbm{w(-\tau)\\z}}{\bbm{w(-\tau)\\z}}&,
\end{aligned}
\end{equation}
which establishes dissipativity if $M\geq0$.

Next, let $\sbm{v\\x}\in\Xscr$ and $f\in\Escr$ be arbitrary. We need to find $\lambda\in\cplus$, $\sbm{w\\z}\in\dom A$ and $e\in\Escr$, such that
\begin{equation}\label{eq:maxdiss}
	\left(\lambda I-\bbm{\AB\\-\CD}\right)\bbm{w\\z\\e}=\bbm{v\\x\\f}.
\end{equation}
Straightforward calculations show that \eqref{eq:maxdiss} is equivalent to
\begin{equation}\label{eq:wsol}
	w(\xi)=e^{\lambda\xi}w(0)-\int_{0}^\xi e^{\lambda(\xi-\eta)}v(\eta)\ud\eta,
\end{equation}
\begin{equation}\label{eq:esol}
e=f/\lambda-E^*w(0)/\lambda\qquad\text{and}
\end{equation}
\begin{equation}\label{eq:w0eq}
\begin{aligned}
\left(A_1e^{-\lambda\tau}+\lambda I-J+R+EE^*/\lambda\right)w(0) &= \\x+Ef/\lambda
-A_1\int^0_{-\tau} e^{-\lambda(\tau+\eta)}v(\eta)\ud\eta.&
\end{aligned}
\end{equation}
Due to the boundedness of the operators, for $\lambda\in\rplus$ large enough, \eqref{eq:w0eq} can be solved for $w(0)$. Defining $w$ by \eqref{eq:wsol} and setting $z:=w(\tau)$, we then get $\sbm{w\\z}\in\dom A$. Defining $e$ by \eqref{eq:esol}, we have that $\SmallSysNode$ is a Dirac node if $M\geq0$.

Now we prove the converse, so let $\omega,z\in\Zscr$ be such that 
$$
	\Ipdp{M\bbm{\omega\\z}}{\bbm{\omega\\z}}<0.
$$
By the definition of $\dom A$ and \eqref{eq:DnodeDiss}, in order to prove that $\sbm{\AB\\-\CD}$ is not dissipative, all we need to do is to find a $w\in H^1(-\tau,0;\Zscr)$, such that $w(-\tau)=\omega$ and $w(0)=z$. Take for instance $w(\xi):=z-\xi/\tau\cdot(\omega-z)$, $0\leq \xi\leq\tau$.

Adding $2\re\Ipdp{e}{E^*z}-\|e\|^2+\|E^*z\|^2$ to \eqref{eq:DnodeDiss}, we get that $\SmallSysNode$ is scattering dissipative if and only if $N\geq0$ with an argument almost identical to the above.

Finally, the smoothness assumptions on $H(\cdot)$ imply that \eqref{eq:standing2} holds with $G$ zero, so that Thm \ref{thm:bibo} is applicable. Now define $z_0:=H(0)^{-1}w_0(0)$ to get $P(0)\sbm{w_0\\z_0}\in\dom A$. By Thm \ref{thm:bibo}, a classical solution $(e,x,f)$ with $x(0)=\sbm{(Hz)|_{[-\tau,0]}\\z(0)}=\sbm{w_0\\z_0}$ exists, and it is unique by Thm \ref{prop:Power}.
\end{proof}

Curiously, scattering passivity implies impedance passivity: $N\geq0\implies M\geq0$. We get an example where $M,N\geq0$ in Thm \ref{thm:DiracNode} by choosing $H_0:=E:=1$, $A_1:=10$ and $R:=100$. It seems like $N\geq0$ only occurs when there is rather strong damping, i.e., when $R$ is large.

\section{Conclusions}

The paper exhibits how time-varying port-Hamiltonian systems, which are not necessarily of boundary-control type, can be solved using the theory of time-varying well-posed linear systems, which was presented in \cite{Kur20}. The external Cayley transform is the tool for passing between the two system classes, and the theory is illustrated with a few examples. 

Currently, the theory of evolution families on Hilbert spaces is limited, and it would need some further development in order to get much farther in the direction of the present paper. Then progress could be made on time-varying port-Hamiltonian systems, using techniques from \cite{JaLa21} and \cite{Kur20}.

\section{Acknowledgment}

The author thanks Benjamin Unger for the example in \S\ref{sec:Ex}. The first part of Thm \ref{thm:DiracNode} is due to Unger and his coauthors.

\def\cprime{$'$}

\end{document}